\documentclass[titlepagea4paper,11pt]{amsart}
\usepackage[arrow,matrix]{xy}
\usepackage{amssymb}
\usepackage{amsbsy,amsmath}
\newtheorem{thm}{Theorem}[section]
\newtheorem{lemma}[thm]{Lemma}
\newtheorem{def+lem}[thm]{Definition+Lemma}
\newtheorem{cor}[thm]{Corollary}

\newtheorem{definition}[thm]{Definition}
\newtheorem{remark}[thm]{Remark}
\newtheorem{prop}[thm]{Proposition}
\newcommand{\Ext}{\mbox{\rm Ext}}

\newcommand{\im}{\mbox{\rm Im}}

\newcommand{\dd}{\partial\bar\partial}

\newcommand{\seq}{\longrightarrow}

\newcommand{\rk}{\mbox{\rm rk\,}}
\newcommand{\m}[1]{\mbox{\rm #1}}
\newcommand{\cal}{\mathcal}
\newcommand{\bb}{\mathbb}

\newtheorem{example}[thm]{Example}

\newenvironment{rem}{\begin{remark}\rm}{\end{remark}}
\newenvironment{defn}{\begin{definition}}{\end{definition}}
\newenvironment{ex}[1]{\begin{example}\rm{\em #1 }}{\hfill$\bigtriangleup$\end{example}}

\title[Rigid rank $2$ vector bundles on non-K\"ahler surfaces]{Rigid holomorphic rank $2$ vector bundles on non-K\"ahler surfaces}

\author{Marco K\"uhnel}
\address{Marco K\"uhnel\\Albert-Ludwigs-Universit\"at Freiburg\\Mathematisches Institut\\Eckerstr. 1\\79104 Freiburg\\Germany}
\email{marco.kuehnel@math.uni-freiburg.de}
\subjclass[2010]{14J60 (primary), 32G13 (secondary)}
\keywords{rigid, holomorphic vector bundle, non-K\"ahler surfaces, non-filtrable, deformation}

\date{\today}

\begin{document}

\begin{abstract}
The interest in rigid vector bundles stems from various sources. From a geometric point of view, non-K\"ahler manifolds are of particular interest with respect to this problem. In this article, a description of the various possible cases of rigid rank $2$ bundles on non-K\"ahler surfaces is presented.
The largest gap of knowledge is for minimal class VII surfaces with $b_{2}(X)\ge 4$.
\end{abstract}

\maketitle

\section{Motivation}

We inquire into determinant preserving deformations of holomorphic vector bundles ${\cal E}\seq X$ over a non-K\"ahler manifold and call ${\cal E}$
rigid if any small determinant preserving deformation of ${\cal E}$ is trivial. The interest in these objects arise from different point of views. Of course, it applies immediately to rigidity problems of the complex structure of projectivisations of vector bundles over non-K\"ahler manifolds.

Another, maybe unexpected, prospective application is the deformation of certain hermitian metrics on open manifolds. If $Y=X\setminus D$ is the complement of a normal crossings divisor in the compact complex manifold $X$, one might look for hermitian metrics on $Y$ with prescribed Ricci curvature and certain asymptotics, e.g. an hermitian metric on $T_{X}(-\log D)$, and ask whether the existence of such a metric is stable under small deformations of $D$. This leads immediately to deformations of holomorphic hermitian vector bundles with fixed Ricci curvature and to the
notion of Ricci-rigidity, describing hermitian vector bundles with only trivial Ricci curvature preserving deformations. The obstruction for the existence of such a deformation lies in $H^1(X,{\cal O}_X)/i_*H^1(X,{\bb R})$. So for K\"ahler manifolds this is unobstructed: Any deformation can be
endowed with an hermitian metric preserving the Ricci curvature. While Ricci curvature preserving
deformations are more abundant than determinant preserving deformations, in general, the notion of Ricci-rigidity turns out to coincide with the usual
rigidity (as defined above).
So the focus switches naturally to non-K\"ahler manifolds with rigid holomorphic vector bundles.
Here we inquire into the case of surfaces and rank $2$ bundles.

Last but not least, this inquiry is motivated by the effort to describe moduli spaces of stable bundles on non-K\"ahler surfaces. 
Stable rigid bundles correspond to point components of certain moduli spaces ${\cal M}^{st}_{\delta, c_{2}}$ of stable rank $2$ vector bundles
with determinant $\delta$ and second Chern class $c_{2}$. Stability is defined with respect to a Gauduchon metric. For class VII surfaces (with small positive $b_{2}(X)$) the moduli space  ${\cal M}^{st}_{K_{X}, 0}$ is of particular interest. Its elements have discriminant $\frac 18b_{2}(X)$. 
For $b_{2}(X)=1$ this moduli space
has been computed in \cite{s08} for all Gauduchon metrics. In this paper we are concerned with the
'extremal' case of discriminant $0$.

\section{Results}
The following theorem states the most important results of this article (cf. Theorems \ref{main1},\ref{main2},\ref{main3}, 
Corollaries \ref{nohopf},\ref{cor3},\ref{cor2}, Propositions \ref{blow},\ref{b2sm4}, Example \ref{exkod}) about existence of rigid rank $2$ bundles on non-K\"ahler compact surfaces.
Here, an Inoue surface is defined as a complex compact surface with $b_{2}(X)=0$ and without curves. 

\begin{thm}\label{result}
Let $X$ be a non-K\"ahler complex compact surface. $X$ allows for a rigid rank $2$ bundle if and only if the minimal model does, and 
then the situation is one of the following list:

\noindent
\begin{tabular}{|c|c|c|c|}
\hline{\bf
list no.}&{\bf $\kappa(X)$}&{\bf minimal model} &{\bf rigid bundles}\\
\hline
1&$-\infty$&Inoue&canonical extension\\
&&&or non-filtrable\\
\hline
2&$-\infty$&$b_{2}(X)\ge 4$&non-filtrable\\
\hline
3&$-\infty$&$b_{2}(X)=3$ w/o cycle&non-filtrable\\
\hline
4&$0$&primary Kodaira&non-filtrable\\
 & &w/ base isogenous to dual fibre& \\
\hline
\end{tabular}
\end{thm}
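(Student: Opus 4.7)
The plan is to assemble the classification from the individual existence and nonexistence results cited in the statement. The argument breaks into two clear stages: a blow-up reduction to the minimal model, followed by a case-by-case sweep through the minimal non-K\"ahler compact surfaces via the Enriques-Kodaira classification.

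First I would invoke Proposition \ref{blow}, which should establish the biconditional between existence of rigid rank $2$ bundles on $X$ and on its minimal model $X_{\min}$. The expected mechanism is to pull back a rigid bundle along the blow-down $X\to X_{\min}$, where rigidity is preserved because determinant-preserving deformations of a rank $2$ bundle can be trivialised on the exceptional $(-1)$-curves, and conversely to descend or transport along the finite chain of blow-ups. Since K\"ahlerness is stable under blow-ups and contractions of smooth points, the minimal model of a non-K\"ahler surface is itself non-K\"ahler, so the reduction remains inside the target category.

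Next, I would run through the minimal non-K\"ahler compact complex surfaces listed by the Enriques-Kodaira classification: $\kappa(X)=-\infty$ yields minimal class VII surfaces (Hopf, Inoue, or those with $b_{2}(X)\ge 1$); $\kappa(X)=0$ yields primary or secondary Kodaira surfaces; $\kappa(X)=1$ yields non-K\"ahler properly elliptic surfaces; and $\kappa(X)=2$ forces a Moishezon, hence projective, surface, so is absent here. For each class I would simply read off the outcome from an earlier result: Corollary \ref{nohopf} excludes Hopf surfaces; Theorem \ref{main1} supplies line $1$ for Inoue surfaces (the canonical extension or the non-filtrable family); Theorem \ref{main2} supplies line $2$ for $b_{2}(X)\ge 4$; Corollary \ref{cor3} supplies line $3$ for $b_{2}(X)=3$ without a cycle of rational curves; Proposition \ref{b2sm4} excludes the remaining small-$b_{2}$ class VII surfaces (i.e.\ $b_{2}\le 2$, as well as $b_{2}=3$ containing a cycle); Theorem \ref{main3} supplies line $4$ for primary Kodaira surfaces whose base is isogenous to the dual fibre; Corollary \ref{cor2} excludes the remaining primary Kodaira surfaces and the secondary Kodaira surfaces; and Example \ref{exkod} excludes the $\kappa=1$ case. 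Assembling the positive statements gives the four rows of the table, while the nonexistence results jointly provide the \emph{only if} direction.

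The substantive obstacle is line $2$: for minimal class VII surfaces with $b_{2}(X)\ge 4$ even the surfaces themselves are not fully classified, so the strongest available assertion is the existence of non-filtrable rigid bundles via Theorem \ref{main2}, with no finer structural description — this is precisely the gap flagged in the abstract. The remaining difficulty is organisational, and essentially one of bookkeeping: one has to check that the cited exclusion results exhaust every minimal non-K\"ahler surface not appearing in the table, so that the \emph{only if} direction really is watertight.
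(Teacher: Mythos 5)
Your overall strategy --- reduce to the minimal model via Proposition \ref{blow}, then sweep the Enriques--Kodaira list of minimal non-K\"ahler surfaces --- is exactly how the theorem is assembled, and the blow-up reduction is handled correctly. However, the dictionary you set up between the cited results and the cases is wrong in several places, and as written the case sweep would not go through. Most seriously: Theorem \ref{main2} is a \emph{non}-existence result for minimal class VII surfaces with $b_{2}(X)<4$ \emph{admitting a cycle}; it says nothing about $b_{2}(X)\ge 4$ and certainly does not ``supply the existence of non-filtrable rigid bundles'' there. Line 2 of the table is obtained from Theorem \ref{main1} alone (filtrable rigid bundles force an Inoue minimal model, so on class VII with $b_{2}\ge 4$ any rigid rank $2$ bundle must be non-filtrable), and no existence statement is known in that range --- that is the gap flagged in the abstract. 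Likewise, line 3 has nothing to do with Corollary \ref{cor3} (which concerns elliptic surfaces over a base of genus $\ge 1$): it is again Theorem \ref{main1} restricting to non-filtrable bundles, with the ``w/o cycle'' hypothesis present precisely because Theorem \ref{main2} kills the $b_{2}=3$ surfaces that do carry a cycle, while Teleman's results guarantee cycles for $b_{2}=1,2$ and hence eliminate those cases entirely.

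Two further misassignments would break the argument. Proposition \ref{b2sm4} does not by itself exclude any surface; it only normalises the rational Chern classes for $b_{2}<4$ and is an ingredient in the proof of Theorem \ref{main2}, so you cannot lean on it to dispose of the small-$b_{2}$ class VII surfaces. And Example \ref{exkod} is the \emph{existence} construction of a non-filtrable rigid bundle on a primary Kodaira surface (justifying that line 4 is non-vacuous); it excludes nothing. The $\kappa(X)=1$ case is instead eliminated by combining Corollaries \ref{cor3} and \ref{cor2} (no filtrable rigid bundles on any non-K\"ahler elliptic surface) with Theorem \ref{main3} (a non-filtrable rigid bundle forces the surface to be primary Kodaira with base isogenous to the dual fibre). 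The same pair of corollaries, together with the secondary-Kodaira exclusion inside the proof of Theorem \ref{main3}, handles $\kappa(X)=0$. Finally, Corollary \ref{nohopf} as stated covers only diagonal Hopf surfaces; general Hopf surfaces (which always contain a curve) are absorbed into Theorem \ref{main2}. With the citations reallocated in this way your skeleton becomes the paper's proof, but as proposed, several of the individual steps rest on results that do not say what you need them to say.
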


Some remarks will clarify the picture. 
In case 1 the canonical extension is rigid, 
indeed. There are examples of case 4 with rigid rank $2$ bundles. Case 2 is due to the lack of knowledge about non-filtrable vector bundles.
Case 3 is simply a gap of the classification of class VII surfaces. It is conjectured that there is always a cycle if $b_{2}(X)>0$. The seminal
results of Teleman \cite{te1,te2} about existence of cycles for minimal class VII surfaces with $b_{2}(X)=1,2$ are already incorporated in the list.
By the proof of Theorem \ref{main2} we can even see that for any $b_{2}(X)$ the existence of a cycle excludes rationally 
topologically trivial rigid rank $2$ bundles. An interesting question would be if the lack of rationally topologically trivial rigid rank $2$ bundles already implies the existence of a cycle.

With respect to moduli spaces of stable bundles the following statement is a corollary of Theorem \ref{result}:

\begin{thm}On a non-K\"ahler compact complex surface the stable rigid rank $2$ bundles are exactly the non-filtrable rigid rank $2$ vector bundles.
\end{thm}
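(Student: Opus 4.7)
The plan is to deduce both implications directly from the classification in Theorem \ref{result} plus a short stability check.

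I would first observe that any non-filtrable rank $2$ torsion-free sheaf is automatically $g$-stable with respect to every Gauduchon metric $g$: by definition it admits no rank $1$ coherent subsheaf, so no line sub-bundle can violate the slope inequality. Hence every non-filtrable rigid rank $2$ bundle is in particular stable and rigid, which gives the easy inclusion.

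For the converse, let $\mathcal{E}$ be a stable rigid rank $2$ bundle on $X$. Theorem \ref{result} enumerates all rigid rank $2$ bundles on non-K\"ahler surfaces, and with the exception of the canonical extension appearing in list item 1 they are all non-filtrable. Consequently the only thing to exclude is that $\mathcal{E}$ is (the pull-back of) a canonical extension on an Inoue minimal model. By construction the canonical extension sits in a non-split short exact sequence $0 \to L \to \mathcal{E} \to L' \to 0$ with explicit line bundles $L, L'$; the degree computation on an Inoue surface, using the known structure of $\operatorname{Pic}(X)$ and of the Gauduchon cone, yields $\deg_g L \geq \tfrac12\deg_g \mathcal{E}$ for every Gauduchon metric $g$. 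This contradicts $g$-stability, so the filtrable case is impossible and $\mathcal{E}$ must be non-filtrable.

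The only real obstacle is this last degree comparison on Inoue surfaces; everything else is a direct reading of Theorem \ref{result}.
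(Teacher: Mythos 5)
Your argument is correct and follows essentially the same route as the paper: non-filtrable implies stable vacuously, and the only filtrable rigid bundles are (pullbacks of) canonical extensions on Inoue surfaces, which are unstable. The "only real obstacle" you flag is in fact already the final assertion of Theorem \ref{main1}, where instability is established by an explicit curvature computation showing $\deg_g K_{X}>0$ for every Gauduchon metric (and, since $\Delta=0$, stability is metric-independent by \cite[Prop.\ 5.2.1]{lt}), so no separate Gauduchon-cone argument is needed.
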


According to \cite[Prop. 4.1]{bm05} the moduli space ${\cal M}^{st}_{\delta,c_{2}}$ is smooth and its dimension is zero, if it is non-empty, $\delta^{2}-4c_{2}=0$ and $X$ is a primary Kodaira surface. So we can rephrase implications of Theorem \ref{result} and Lemma \ref{nonfilt} as a smoothness and an existence result.

\begin{cor}Let $X$ be a non-K\"ahler compact complex surface. If ${\cal M}^{st}_{\delta,c_{2}}$ is non-empty and has a zero-di\-men\-sional component,
then it is smooth and has dimension zero.

There are primary Kodaira surfaces admitting a (non-empty) zero-di\-men\-sional smooth moduli space ${\cal M}^{st}_{\delta,c_{2}}$.
\end{cor}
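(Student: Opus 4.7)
The plan is to read both assertions off Theorem \ref{result}, the preceding theorem identifying stable rigid bundles with non-filtrable rigid ones, and \cite[Prop.~4.1]{bm05}.

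For part one, I would pick $[\cal E]$ in a zero-dimensional component of $\cal M^{st}_{\delta,c_2}$. Then $\cal E$ is stable and admits no positive-dimensional family of determinant-preserving deformations, hence is rigid in the sense of Section 1, and the preceding theorem makes it non-filtrable. To promote set-theoretic zero-dimensionality into scheme-theoretic smoothness it suffices to verify that the Kuranishi obstruction $H^2(X,\m{End}_{0}\cal E)$ vanishes. On a primary Kodaira surface $K_X\cong\cal O_X$, so Serre duality gives $H^2(\m{End}_{0}\cal E)\cong H^0(\m{End}_{0}\cal E)^{\vee}=0$ by stability. For Inoue and minimal class VII surfaces the same Serre-duality argument works once one observes that $\m{End}_{0}\cal E\otimes K_X$ is polystable of non-positive Gauduchon degree with a summand structure that rules out a trivial component, hence has no sections. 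With $H^2=0$ the Kuranishi germ is smooth of dimension $h^1(\m{End}_{0}\cal E)$; the zero-dimensionality forces $h^1=0$, so the component is a reduced isolated point.

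For part two, Example \ref{exkod} together with Lemma \ref{nonfilt} produces a primary Kodaira surface $X$ with base isogenous to the dual fibre, together with a non-filtrable rigid rank $2$ bundle $\cal E$ on it. The preceding theorem turns $\cal E$ into a stable rigid bundle, so $[\cal E]\in\cal M^{st}_{\delta,c_2}$ for the associated invariants and this moduli space is non-empty; part one then yields smoothness and zero-dimensionality. As a cross-check, Riemann-Roch on the primary Kodaira surface gives $\chi(\m{End}_{0}\cal E)=-2\Delta(\cal E)$, so $h^0=h^1=h^2=0$ forces $\Delta(\cal E)=0$, and \cite[Prop.~4.1]{bm05} applies directly.

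The main obstacle is uniform control of the obstruction space $H^2(X,\m{End}_{0}\cal E)$ across the four minimal-model types in Theorem \ref{result}. On primary Kodaira triviality of $K_X$ makes this transparent, but on Inoue and minimal class VII one has to exploit the Gauduchon degree of $K_X$ combined with polystability of $\m{End}_{0}\cal E$ to kill $H^0(\m{End}_{0}\cal E\otimes K_X)$; this is precisely where the non-K\"ahler hypothesis enters essentially.
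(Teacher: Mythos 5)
Your skeleton is the paper's: identify a point of a zero-dimensional component with a stable, hence non-filtrable, rigid bundle, kill the obstruction space $H^2(\m{ad}({\cal E}))$ to get smoothness, and use Example \ref{exkod} (with Lemma \ref{nonfilt} and \cite[Prop.~4.1]{bm05}) for existence. The primary Kodaira case and the Riemann--Roch bookkeeping are fine, up to a constant: for rank $2$ equation (\ref{fam}) gives $\chi(\m{ad}({\cal E}))=-8\Delta({\cal E})$, not $-2\Delta({\cal E})$, though the conclusion $\Delta({\cal E})=0$ is unaffected.

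The genuine gap is your treatment of the obstruction space in the Inoue and minimal class VII cases. You claim $\m{ad}({\cal E})\otimes K_X$ is polystable of \emph{non-positive} Gauduchon degree and hence has no sections. But $\deg_g(\m{ad}({\cal E})\otimes K_X)=3\deg_g K_X$, and the paper proves in Theorem \ref{main1} that $\deg_g K_X>0$ for \emph{every} Gauduchon metric on an Inoue surface (and there is no general non-positivity for minimal class VII surfaces either). A polystable bundle of positive slope may well have sections, so the slope argument does not close the case, and the ``summand structure that rules out a trivial component'' is not substantiated. The vanishing you need is precisely the ``goodness'' assertion of Lemma \ref{nonfilt}, whose proof uses non-filtrability rather than polystability: a nonzero $s:{\cal E}\to{\cal E}\otimes K_X$ satisfies $\det s\in H^0(K_X^{\otimes 2})=0$ (since $\kappa(X)=-\infty$, resp.\ $K_X\neq{\cal O}_X$ non-torsion of order $\neq 2$ in the Kodaira case), so $\ker s$ is a rank $1$ subsheaf and ${\cal E}$ would be filtrable. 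Since you have already extracted non-filtrability from the theorem identifying stable rigid with non-filtrable rigid bundles, simply invoking Lemma \ref{nonfilt} (whose hypotheses are met for all four minimal models occurring in Theorem \ref{result}) repairs the step; the polystability detour as written does not.
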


\section{Introduction}

We choose a geometric approach to rigidity. 
Let $X$ be a complex manifold and ${\cal E}$ a holomorphic vector bundle on $X$. A deformation of ${\cal E}$ parametrised by the complex
manifold $T$ with distinguished point $0$ is a holomorphic vector bundle ${\bb E}$ on $X\times T$ such that ${\bb E}|_{X\times\{0\}}\cong\cal E$. We
will write $\cal E_{t}:={\bb E}|_{X\times\{t\}}$. Correspondingly, a small deformation ${\bb E}$ of $\cal E$ is a vector bundle on $X\times(T,0)$, where
$(T,0)$ is the germ of $T$ in $0$. Let $\pi_{1}:X\times(T,0)\seq X$ be the projection onto the first factor. 
A small deformation is trivial if it is isomorphic to $\pi_{1}^{*}\cal E$. It is called determinant preserving if the induced small deformation $\det{\bb E}$ of $\det\cal E$ is trivial.

\begin{defn}
A holomorphic vector bundle $\cal E$ on a complex manifold $X$ is said to be {\em rigid} if every small determinant preserving deformation is trivial. 
\end{defn}

The trace map gives a natural splitting
$${\cal End}({\cal E})={\cal O}_X\oplus\m{ad}({\cal E})$$
into the trace-multiple of the identity and the trace-free part of an endomorphism. Standard deformation theory computes the number
of moduli of ${\cal E}$ with fixed determinant as $h^1(\m{ad}({\cal E}))$ if $H^2(\m{ad}({\cal E}))=0$, i.\,e.\,if ${\cal E}$ is good; in general, there is the Kuranishi family, an effective family of dimension at least $h^1(\m{ad}({\cal E}))-h^2(\m{ad}({\cal E}))$ deforming ${\cal E}$ with fixed determinant \cite[p.302]{FM}.
In the non-K\"ahler setting the discriminant 
$$\Delta({\cal E}):=\frac 1r\left(c_2({\cal E})-\frac{r-1}{2r}c_1({\cal E})^2\right)$$
satisfies
$$\Delta({\cal E})\ge 0$$
for any holomorphic vector bundle ${\cal E}$ of rank $r$ (cf. \cite{brf},\cite{bp87}).
So the following formula -- simply a reformulation of the Riemann-Roch formula -- proves to be useful. On surfaces $X$ we have
\begin{equation}\label{fam}h^1(\m{ad}({\cal E}))-h^2(\m{ad}({\cal E}))=h^0(\m{ad}({\cal E}))+2r^2\Delta({\cal E})-(r^2-1)\chi({\cal O}_X). 
\end{equation}
Note that any non-K\"ahler surface $X$ satisfies $\chi({\cal O}_{X})=0$.
This implies immediately

\begin{prop}\label{imm}
Let $X$ be a compact complex non-K\"ahler surface 
and ${\cal E}$ a holomorphic vector bundle on $X$. If ${\cal E}$ is
rigid, then ${\cal E}$ is simple and $\Delta({\cal E})=0$. 
\end{prop}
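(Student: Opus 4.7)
The proof will hinge on just three ingredients that the excerpt has already set up: the characterization of rigidity via the Kuranishi family, the Bogomolov-type positivity $\Delta({\cal E}) \ge 0$, and the Riemann--Roch identity (\ref{fam}). The strategy is simply to chain them together.

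First I would observe that if ${\cal E}$ is rigid then the base of the Kuranishi family of determinant-preserving deformations must be zero-dimensional. Indeed, the Kuranishi family is effective (versal), so over any positive-dimensional reduced base it would deform ${\cal E}$ into bundles non-isomorphic to ${\cal E}$, contradicting the definition of rigidity. Combined with the lower bound on its dimension quoted in the excerpt, this yields
$$h^1(\m{ad}({\cal E})) - h^2(\m{ad}({\cal E})) \le 0.$$

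Next I would invoke the identity (\ref{fam}), using that $\chi({\cal O}_X) = 0$ for every non-K\"ahler compact surface:
$$h^1(\m{ad}({\cal E})) - h^2(\m{ad}({\cal E})) = h^0(\m{ad}({\cal E})) + 2r^2 \Delta({\cal E}).$$
Both summands on the right are non-negative ($h^0 \ge 0$ trivially, and $\Delta({\cal E}) \ge 0$ by the cited Bogomolov-type inequality on non-K\"ahler surfaces), so the previous inequality forces them both to vanish. This gives $\Delta({\cal E}) = 0$ directly; together with the trace splitting ${\cal End}({\cal E}) = {\cal O}_X \oplus \m{ad}({\cal E})$ it then yields $h^0({\cal End}({\cal E})) = 1$, i.e.\ ${\cal E}$ is simple.

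The only slightly delicate point is the first step, the reduction ``rigid $\Rightarrow$ Kuranishi base is zero-dimensional'': one needs to be mildly careful about scheme-theoretic versus set-theoretic dimension, and about the fact that the definition of rigidity in the paper allows an arbitrary smooth parameter manifold $T$, not just the Kuranishi base. But since everything downstream depends only on the numerical inequality $h^1-h^2\le 0$, the virtual-dimension bound stated in the excerpt is all that is required, and no finer deformation-theoretic analysis is needed.
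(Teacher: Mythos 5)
Your proof is correct and is essentially the argument the paper intends: the paper's ``this implies immediately'' refers to exactly the chain you spell out, namely rigidity forces the Kuranishi bound $h^1(\m{ad}({\cal E}))-h^2(\m{ad}({\cal E}))\le 0$, which via (\ref{fam}) with $\chi({\cal O}_X)=0$ kills both non-negative terms $h^0(\m{ad}({\cal E}))$ and $2r^2\Delta({\cal E})$, giving simplicity from the trace splitting and $\Delta({\cal E})=0$. Your added remark about the effectivity of the Kuranishi family is a reasonable point of care that the paper leaves implicit.
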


This triggers the question about stability of rigid bundles. In this context, stability is understood with respect to a Gauduchon metric $g$ 
on $X$, which induces a degree
$$\deg_{g}({\cal F}):=\int_{X}c_{1}({\cal F}, h)\wedge\omega,$$
where $c_{1}({\cal F}, h)$ is the first Chern form with respect to the Chern connection of any hermitian metric $h$ on a locally free sheaf
${\cal F}$ and $\omega$ is the fundamental form of the Gauduchon metric $g$. However, it follows from Prop. 5.2.1 in \cite{lt} that for vector bundles
${\cal E}$ with $\Delta({\cal E})=0$ stability is actually independent of the Gauduchon metric $g$. At this point, more cannot be said. We will see later
an example of a rigid, unstable rank $2$ bundle.
 
Consider a blowup $\pi:Y\seq X$ of a point $x$ with exceptional divisor $E$ and a Gauduchon metric $g$ on $X$ with fundamental form $\omega$.
If $\eta$ is the $(1,1)$-form corresponding to the Poincar\'e dual of $E$, then
$$\tilde\omega_{\varepsilon}:=\pi^{*}\omega-\varepsilon\eta$$
is the fundamental form of a Gauduchon metric $\tilde g_{\varepsilon}$ on $Y$ for small enough $\varepsilon$.  
How do rigidity and stability of vector bundles behave towards blowup? In the sequel we have to distinguish between filtrable and non-filtrable
rigid vector bundles. 

\begin{prop}\label{blow}
 Let $X$ be a compact complex surface and $\pi:Y\seq X$ be the blowup of $X$ in a point. 
\begin{enumerate}
 \item A holomorphic rank $2$ vector bundle ${\cal E}$ on  $X$ is filtrable if and only if $\pi^*{\cal E}$ is.
 \item A holomorphic rank $2$ vector bundle ${\cal E}$ on  $X$ is $g$-stable if $\pi^*{\cal E}$ is $\tilde g_{\varepsilon}$-stable.
 \item A holomorphic vector bundle ${\cal E}$ on  $X$ is rigid if and only if $\pi^*{\cal E}$ is.
 \item If $X$ is non-K\"ahler and $\tilde{\cal E}$ on $Y$ a  rigid holomorphic rank $2$ bundle, then there is a 
rigid holomorphic rank $2$ bundle ${\cal E}$ on $X$ such that $\tilde{\cal E}=\pi^*{\cal E}$.	
\end{enumerate}
\end{prop}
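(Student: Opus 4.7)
The plan is to reduce (iv) to showing that $\tilde{\cal E}|_E\cong{\cal O}_E^{\oplus 2}$ on the exceptional $E\cong\bb P^1$. Once this is established, the standard blow-down descent --- the projection formula together with $R^i\pi_*{\cal O}_Y=0$ for $i>0$ and the triviality of $\tilde{\cal E}|_E$ --- produces a locally free rank $2$ sheaf ${\cal E}:=\pi_*\tilde{\cal E}$ on $X$ with $\pi^*{\cal E}\cong\tilde{\cal E}$, and rigidity of ${\cal E}$ then follows immediately from part~(iii).

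To prove the triviality, apply Proposition~\ref{imm}: since $Y$ is non-K\"ahler whenever $X$ is, the rigid bundle $\tilde{\cal E}$ is simple with $\Delta(\tilde{\cal E})=0$. Write the Grothendieck splitting $\tilde{\cal E}|_E\cong{\cal O}_E(a)\oplus{\cal O}_E(b)$ with $a\ge b$; the goal is $a=b=0$. The plan is to contradict rigidity in every other case by producing a non-trivial first-order determinant-preserving deformation of $\tilde{\cal E}$. The main tool is the short exact sequence
$$0\seq\m{ad}(\tilde{\cal E})\otimes{\cal O}_Y(-E)\seq\m{ad}(\tilde{\cal E})\seq\m{ad}(\tilde{\cal E})|_E\seq 0,$$
together with the identification $\m{ad}(\tilde{\cal E})|_E\cong{\cal O}_E\oplus{\cal O}_E(a-b)\oplus{\cal O}_E(b-a)$. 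Whenever $a-b\ge 2$, the group $H^1(E,\m{ad}(\tilde{\cal E})|_E)$ is non-zero, and its classes correspond to infinitesimal changes of the splitting type along $E$; provided the obstruction in $H^2(Y,\m{ad}(\tilde{\cal E})(-E))$ vanishes, they lift to non-zero classes in $H^1(Y,\m{ad}(\tilde{\cal E}))$. The non-K\"ahler hypothesis enters precisely here through $\chi({\cal O}_Y)=0$ and formula~(\ref{fam}), which couples $h^1(\m{ad}(\tilde{\cal E}))$ with $h^2(\m{ad}(\tilde{\cal E}))$ and is what allows one to upgrade the first-order deformation to an actual one, contradicting rigidity.

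The hard part will be the case $a+b\ne 0$, where $c_1(\tilde{\cal E})$ carries a non-trivial exceptional component and so $\det\tilde{\cal E}$ is not itself a pullback from $X$. (The remaining case $a+b=0$ with $(a,b)\ne(0,0)$ automatically satisfies $a-b\ge 2$ and is handled by the cohomological argument above.) For this edge case, one likely has to use an elementary-transformation argument along $E$ --- replacing $\tilde{\cal E}$ by the kernel of a surjection onto a suitable skyscraper-type quotient on $E$ --- or construct deformations by perturbing the centre of the blow-up, again drawing on the non-K\"ahler input $\chi({\cal O}_X)=0$ together with simplicity and $\Delta(\tilde{\cal E})=0$ to preclude any non-pullback component of $c_1(\tilde{\cal E})$. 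Combined with the cohomological argument, this yields $a=b=0$ and completes the proof.
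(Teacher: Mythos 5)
Your proof of part~(iv) has a genuine gap at its central step. Your mechanism for excluding a non-trivial splitting type $\tilde{\cal E}|_E\cong{\cal O}_E(a)\oplus{\cal O}_E(b)$ is to produce a non-zero class in $H^1(Y,\m{ad}(\tilde{\cal E}))$ and then ``upgrade'' it to an actual deformation using formula~(\ref{fam}). But for a simple bundle with $\Delta(\tilde{\cal E})=0$ on a non-K\"ahler surface, formula~(\ref{fam}) gives exactly $h^1(\m{ad}(\tilde{\cal E}))-h^2(\m{ad}(\tilde{\cal E}))=h^0(\m{ad}(\tilde{\cal E}))=0$, so the Kuranishi lower bound on the dimension of the family is zero: a non-zero $H^1$ is entirely compatible with rigidity, because the first-order deformation may be obstructed. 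No contradiction results. In addition, your cohomological argument only detects $a-b\ge 2$; the case $a-b=1$ (which occurs whenever $\deg(\det\tilde{\cal E}|_E)$ is odd) is relegated to a sketch (``elementary transformation or perturb the centre'') that is not carried out. Finally, the case $a+b\ne 0$ that you flag as the hard part is in fact trivial: tensoring with ${\cal O}_Y(kE)$, which preserves rigidity and $\Delta$, shifts both summands by $-k$ since ${\cal O}_Y(E)|_E={\cal O}_E(-1)$, so one may always normalise to $\tilde{\cal E}|_E={\cal O}_E\oplus{\cal O}_E(-\lambda)$ with $\lambda=a-b\ge 0$.

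The paper's argument avoids deformation theory altogether and is purely numerical. After the normalisation just described, suppose $\lambda>0$ and form the elementary modification $\tilde{\cal F}=\ker\bigl(\tilde{\cal E}\seq{\cal O}_E(-\lambda)\bigr)$; a Chern class computation gives $\Delta(\tilde{\cal F})=\Delta(\tilde{\cal E})+\frac 18(1-2\lambda)=\frac 18(1-2\lambda)<0$. This contradicts not rigidity but the inequality $\Delta\ge 0$, valid for \emph{every} holomorphic vector bundle on a non-K\"ahler surface (\cite{brf}, \cite{bp87}) and already quoted before Proposition~\ref{imm}. That inequality is the key input your proposal is missing, and it handles all $\lambda\ge 1$ uniformly. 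Once $\lambda=0$ is established, your descent via $\pi_*$ and part~(iii) agrees with the paper. Note also that your proposal says nothing about parts (i)--(iii) of the proposition, each of which requires its own (short) argument.
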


\begin{proof}\begin{enumerate}
\item  Let $E\cong{\bb P}^1$ be the exceptional divisor
of $\pi$ and $x$ the blown up point. Let us assume $\pi^*{\cal E}$
is filtrable, i.\,e.\,there is a line bundle
${\cal L}$ on $Y$ such that $H^0(\pi^*\cal E\otimes{\cal L})\not=0$. We decompose
$${\cal L}=\pi^*{\cal M}\otimes{\cal O}_Y(kE)$$
for some line bundle ${\cal M}$ on $X$ and $k\in{\bb Z}$. We denote
$${\cal S}_{k}:=\pi_{*}{\cal O}_Y(kE)=\left\{\begin{array}{cc}{\cal O}_{X}&\text{, if }k\ge 0\\
{\cal J}_{x}^{-k}&\text{, if }k<0\end{array}\right.$$
and compute by pushing forward
$$0\not=H^0(\pi^*\cal E\otimes{\cal L})=H^0({\cal E}\otimes{\cal M}\otimes{\cal S}_{k})\subset H^0({\cal E}\otimes{\cal M}).$$
In particular, ${\cal E}$ is filtrable.
The converse is trivially true.

\item Assume $\pi^*{\cal E}$ is $\tilde g_{\varepsilon}$-stable. Let ${\cal F}$ be a subline bundle of ${\cal E}$. Then
$$\deg_{g}{\cal F}=\deg_{\tilde g_{\varepsilon}}\pi^{*}{\cal F}<\frac 12\deg_{\tilde g_{\varepsilon}}\pi^{*}\cal E=\deg_{g}\cal E,$$
i.\,e.\,$\cal E$ is $g$-stable.

\item Let $\tilde{\cal E}_t$ be a small deformation of $\pi^*{\cal E}$ preserving $\pi^*\det{\cal E}$. Since
$$\pi^*{\cal E}|_E={\cal O}_E^{\oplus r},$$
also
$$\tilde{\cal E}_t|_E={\cal O}_E^{\oplus r}.$$
This again means that $\tilde{\cal E}_t=\pi^*{\cal E}_t$ for the vector bundle ${\cal E}_t:=\pi_*\tilde{\cal E}_t$.
So the deformation was induced by a deformation ${\cal E}_t$ of ${\cal E}$. Obviously, triviality of $\tilde{\cal E}_t$
is equivalent to the triviality of ${\cal E}_t$. This proves that $\pi^*{\cal E}$ is rigid if ${\cal E}$ is.
The converse is trivial.

\item We prove that $\Delta(\tilde{\cal E})=0$ implies $\tilde{\cal E}=\pi^{*}{\cal E}$.  After tensoring with a line bundle we may assume a
splitting
$$\tilde{\cal E}|_{E}={\cal O}_{E}\oplus{\cal O}_{E}(-\lambda)$$
for some $\lambda\ge 0$ and assume $\lambda>0$. The splitting sequence
$$0\seq{\cal O}_{E}\seq\tilde{\cal E}|_{E}\seq{\cal O}_{E}(-\lambda)\seq 0$$
gives rise to an elementary modification $\tilde{\cal F}$ of $\tilde{\cal E}$ with discriminant
$$\Delta(\tilde{\cal F})=\Delta(\tilde{\cal E})+\frac 18(1-2\lambda)=\frac 18(1-2\lambda)<0,$$
a contradiction. So $\lambda=0$ and by the criterion mentioned above and the second claim of this proposition 
this means $\tilde{\cal E}=\pi^{*}{\cal E}$ for a rigid rank $2$ bundle ${\cal E}$ on $X$.

             \end{enumerate}
\end{proof}

\begin{rem}
We could prove here directly that on non-K\"ahler surfaces with $a(X)=0$ stability of a rigid rank $2$ bundle on $X$ implies also the stability
of the pullback under a blowup. However, as it will turn out that the only stable rigid rank $2$ bundles are non-filtrable on any non-K\"ahler surface,
a more general claim follows then already by Prop.\,\ref{blow}. This proposition will also be used to prove the non-filtrability of stable rigid rank $2$ bundles.\end{rem}

In the following it will be important to distinguish between filtrable and non-filtrable rigid rank $2$ bundles. This is based on the following result.

\begin{lemma}\label{nonfilt}
Let $X$ be a non-K\"ahler compact complex surface with $\kappa(X)=\{-\infty,0\}$, but the minimal model $X'$
not a secondary Kodaira surface with $K_{X'}$ of order $2$,
and ${\cal E}$ a non-filtrable holomorphic rank $2$ bundle over
$X$. Then ${\cal E}$ is rigid if and only if $\Delta({\cal E})=0$. Moreover, ${\cal E}$ is good if it is rigid.
\end{lemma}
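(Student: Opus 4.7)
The ``only if'' direction is Proposition \ref{imm}. For the converse, the plan is to combine the Riemann--Roch formula (\ref{fam}) with Serre duality and a twisted Cayley--Hamilton argument. First I would reduce to the case $X$ minimal: the proof of Prop.\,\ref{blow}(iv) uses only $\Delta=0$, so iterated contraction writes ${\cal E}=\pi^*{\cal E}'$ with ${\cal E}'$ of vanishing discriminant and (by Prop.\,\ref{blow}(i)) non-filtrable on the minimal model $X'$; by Prop.\,\ref{blow}(iii) rigidity is preserved in both directions, and the hypothesis on $X'$ is unchanged.

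On minimal $X$, non-filtrability of the rank $2$ bundle ${\cal E}$ forces simplicity: any $\phi\in\m{End}({\cal E})$ satisfies $\phi^2-(\tr\phi)\phi+\det\phi\cdot I=0$, and compactness of $X$ makes $\tr\phi,\det\phi$ constants, so a non-scalar $\phi$ would yield $\phi-\lambda I$ with generically rank $1$ kernel saturating to a line subbundle of ${\cal E}$. Hence $h^0(\m{ad}({\cal E}))=0$, and inserting $r=2$, $\Delta=0$, $\chi({\cal O}_X)=0$ into (\ref{fam}) gives $h^1(\m{ad}({\cal E}))=h^2(\m{ad}({\cal E}))$. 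By Serre duality and the self-duality of $\m{ad}({\cal E})$ this common value equals $h^0(\m{ad}({\cal E})\otimes K_X)$; both rigidity and goodness will follow as soon as this group is shown to vanish.

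The crux is therefore the vanishing of $H^0(\m{ad}({\cal E})\otimes K_X)$. If $K_X\cong{\cal O}_X$ (the primary Kodaira case), this group is $H^0(\m{ad}({\cal E}))$ and vanishes by the previous step. Otherwise I would take a hypothetical non-zero trace-free $\phi\colon{\cal E}\seq{\cal E}\otimes K_X$; the twisted Cayley--Hamilton identity yields $\phi\circ\phi=-\det\phi\cdot I$ with $\det\phi\in H^0(K_X^2)$. The permitted minimal models are arranged so that $H^0(K_X^2)=0$: for $\kappa(X)=-\infty$ this is the defining vanishing of Kodaira dimension, and for secondary Kodaira with $K_X$ of order $n\in\{3,4,5,6\}$ the line bundle $K_X^2$ is a non-trivial torsion class and hence sectionless. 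Thus $\det\phi=0$, $\phi$ is nilpotent and of generic rank $1$, and $\ker\phi$ saturates to a line subbundle of ${\cal E}$, contradicting non-filtrability. The main obstacle of the proof is exactly this vanishing, which also explains why the $n=2$ secondary Kodaira case must be excluded: there $K_X^2\cong{\cal O}_X$ allows $\det\phi$ to be a non-zero constant and $\phi$ becomes an isomorphism ${\cal E}\cong{\cal E}\otimes K_X$ from which no filtration can be extracted.
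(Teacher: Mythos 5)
Your proof follows the paper's argument essentially verbatim: reduce to the minimal model via Proposition \ref{blow}, use simplicity of non-filtrable bundles together with (\ref{fam}) to reduce both rigidity and goodness to the vanishing of $H^{0}(\m{ad}({\cal E})\otimes K_X)$ (resp.\ $H^{2}({\cal End}({\cal E}))$ via Serre duality), and kill any non-zero $\phi:{\cal E}\seq{\cal E}\otimes K_X$ by observing $\det\phi\in H^{0}(K_X^{2})=0$ under the stated hypotheses, so that $\ker\phi$ would give a rank $1$ subsheaf contradicting non-filtrability. The only blemish is the list $\{3,4,5,6\}$ of admissible orders of $K_{X'}$ on a secondary Kodaira surface (it should be $\{3,4,6\}$ once order $2$ is excluded), which does not affect the argument.
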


\begin{proof}
We assume $\Delta({\cal E})=0$ and by Proposition \ref{blow} we may assume that
$X$ is minimal.
Since any non-filtrable vector bundle is simple, equation (\ref{fam}) implies that it suffices to prove $H^2(\m{ad}({\cal E}))=0$.
If $K_{X}={\cal O}_{X}$, then the claim follows directly by the simplicity of ${\cal E}$.
If $K_{X}\not={\cal O}_{X}$, this is equivalent to $H^2({\cal End}({\cal E}))=0$.  Using Serre duality we want to prove that any vector bundle homomorphism
$$s:{\cal E}\seq{\cal E}\otimes K_X$$
is zero. We observe $\det s\in H^0(K_X^{\otimes 2})=0,$ by our assumptions. If $s\not\equiv 0$, then $\ker s\subset{\cal E}$ is a coherent rank $1$ subsheaf. So ${\cal E}$ is filtrable, contrary
to the assumption. This finally proves $s\equiv 0$.  
\end{proof}

We will make extensive use of Lemma \ref{nonfilt} in the later sections.

\section{Filtrable rigid rank $2$ bundles on non-K\"ahler surfaces with $a(X)=0$}

The main result of this section is

\begin{thm}\label{main1}
 Let $X$ be a non-K\"ahler compact complex surface with $a(X)=0$. 
 The following statements are equivalent:
\begin{enumerate}
 \item $X$ allows for a filtrable rigid holomorphic rank $2$ vector bundle.
 \item The minimal model of $X$ is an Inoue surface, i.\,e.\,non-K\"ahler, $b_2(X)=0$ and without curves.
\end{enumerate}
Let $X'$ be the minimal model and $\pi:X\seq X'$ the blowdown of all $(-1)$-curves. 
If the statements are valid, then the only filtrable rigid rank $2$ bundle $\tilde{\cal E}$ is the pullback of the
canonical extension on $X'$ via $\pi$, i.\,e.\,$\tilde{\cal E}=\pi^*{\cal E}$ and ${\cal E}$ is the unique rank $2$ bundle
given by the non-split extension
$$0\seq K_{X'}\seq{\cal E}\seq{\cal O}_{X'}\seq 0,$$
up to tensoring with a line bundle. All of these are unstable.
\end{thm}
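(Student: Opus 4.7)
The plan is to reduce to the minimal model via Proposition \ref{blow}, use $\Delta(\mathcal E)=0$ from Proposition \ref{imm} together with the negative semidefiniteness of the intersection form on a non-K\"ahler surface to force a very constrained extension structure, and then check that only Inoue surfaces support such a bundle.

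By parts (i), (iii), (iv) of Proposition \ref{blow}, filtrable rigid rank $2$ bundles on $X$ are exactly pullbacks of such bundles on the minimal model $X'$, so one may assume $X=X'$ is minimal. Let $\mathcal E$ be filtrable rigid, fitting in $0 \to \mathcal L \to \mathcal E \to \mathcal M \otimes \mathcal I_Z \to 0$. Proposition \ref{imm} yields $\Delta(\mathcal E)=0$, and computing Chern classes gives
$$(c_1(\mathcal L)-c_1(\mathcal M))^2=4\,\m{length}(Z).$$
Since $b_2^+(X)=0$ for a minimal non-K\"ahler surface, the intersection form on $NS(X)_{\mathbb R}$ is negative semidefinite, forcing $Z=\emptyset$ and $c_1(\mathcal L)-c_1(\mathcal M)$ torsion. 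After tensoring with a line bundle one may assume $\mathcal M=\mathcal O_X$ and $c_1(\mathcal L)\in H^2(X,\mathbb Z)_{\m{tors}}$, and the extension is non-split (otherwise $\mathcal E$ is not simple, contradicting Proposition \ref{imm}).

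For the existence and uniqueness direction, assume $X$ is Inoue, so $H^2(X,\mathbb Z)$ is purely torsion (as $b_2=0$) and $H^0(L)=0$ for every non-trivial line bundle $L$ (no curves). Riemann--Roch with $\chi(\mathcal O_X)=0$ together with Serre duality show that $h^1(\mathcal L)\ne 0$ forces $\mathcal L\in\{\mathcal O_X,K_X\}$. The case $\mathcal L=\mathcal O_X$ gives a non-simple $\mathcal E$ (the composition $i\circ q$ is a nonzero nilpotent endomorphism), so $\mathcal L=K_X$ is forced, and $\Ext^1(\mathcal O_X,K_X)=H^1(K_X)\cong\mathbb C$ provides the canonical extension $\mathcal E$, unique up to scaling. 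Simplicity is checked by analyzing an endomorphism $\phi$: the induced map $K_X\to\mathcal O_X$ vanishes ($H^0(-K_X)=0$); non-splitness forces the diagonal scalars to agree; the residual off-diagonal $\mathcal O_X\to K_X$ vanishes ($H^0(K_X)=0$). An analogous Serre-duality computation shows $h^2(\m{ad}(\mathcal E))=0$, so formula (\ref{fam}) with $\Delta(\mathcal E)=0$ and $\chi(\mathcal O_X)=0$ gives $h^1(\m{ad}(\mathcal E))=0$, and $\mathcal E$ is rigid.

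For the non-existence direction, the only other minimal non-K\"ahler surfaces with $a(X)=0$ are class VII surfaces with $b_2>0$. Here $H^2(X,\mathbb Z)$ has rank $b_2>0$ and $c_1(K_X)^2=-b_2\ne 0$, so $K_X$ itself fails the torsion constraint and $\mathcal L$ must lie in $\m{Pic}^0(X)$, a $1$-dimensional complex Lie group. When $X$ contains no curves, the Riemann--Roch plus Serre-duality argument above shows $h^1(\mathcal L)=0$ for every $\mathcal L\in\m{Pic}^0(X)\setminus\{\mathcal O_X\}$, so $\mathcal L=\mathcal O_X$ is the only possibility and the resulting $\mathcal E$ is not simple, so no filtrable rigid bundle exists. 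When $X$ does contain curves, the plan is to construct a non-trivial determinant-preserving deformation by translating $\mathcal L$ to $\mathcal L\otimes\mathcal N_t$ along a path in $\m{Pic}^0(X)$ and twisting the quotient by $\mathcal N_t^{-1}$, so that the extension class varies in $H^1(\mathcal L\otimes\mathcal N_t^2)$; establishing that this family is genuinely non-trivial requires a detailed analysis of line-bundle cohomology on minimal class VII surfaces with $b_2>0$ and $a=0$, split further by the structure of their curves (cycles, chains, etc.). This latter step is the main technical obstacle I expect in the proof. Finally, instability of the canonical extension on Inoue follows because $K_X$ is a sub-line bundle of $\mathcal E$ and, by the $\Delta=0$ metric--independence of stability and a standard computation producing a Gauduchon metric with $\deg_g(K_X)\ge 0$, $K_X$ is slope-matching (or destabilizing), preventing $\mathcal E$ from being stable.
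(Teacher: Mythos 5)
Your reduction to the minimal model, the Chern-class computation forcing $Z=\emptyset$ and $c_1(\mathcal L)-c_1(\mathcal M)$ torsion, and the classification on Inoue surfaces (where $h^1(\mathcal L)\neq 0$ forces $\mathcal L\in\{\mathcal O_{X'},K_{X'}\}$, the trivial case is non-simple, and the canonical extension is rigid via $h^2(\m{ad}(\mathcal E))=0$ and formula (\ref{fam})) all match the paper. But there is a genuine gap in the direction (i)$\Rightarrow$(ii): you do not exclude minimal class VII surfaces with $b_2>0$ that \emph{contain curves}. Your torsion constraint only puts $\mathcal L$ in $\m{Pic}^0(X)\cong\mathbb C^*$; when curves are present, the vanishing $h^1(\mathcal L)=0$ for $\mathcal L\neq\mathcal O_X$ fails in general (e.g.\ line bundles supported near a cycle can have sections or $h^2\neq 0$), and the deformation you sketch --- twisting the sub by $\mathcal N_t^{-1}$ and the quotient by $\mathcal N_t$ --- requires showing that the extension classes fit into a genuinely non-trivial flat family, which runs into exactly the semicontinuity/jumping issues the paper has to fight in Corollary \ref{cor3} for elliptic surfaces. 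You flag this yourself as ``the main technical obstacle,'' and it is: without it the equivalence is not proved. The paper sidesteps all of this by invoking Br\^inz\u anescu's classification \cite[Thm.\ 4.34]{b95} of filtrable simple rank $2$ bundles with $\Delta=0$ on minimal surfaces with $a(X)=0$, which directly yields $b_2(X')=0$ and the absence of curves; if you do not want to cite that result, you must supply the case analysis you deferred.

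Two smaller points. First, to get that $c_1(\mathcal L)-c_1(\mathcal M)$ is torsion from $(c_1(\mathcal L)-c_1(\mathcal M))^2=0$ you need negative \emph{definiteness} of the intersection form on $H^2(X,\mathbb Z)/\m{tors}$ (which does hold for minimal class VII by Donaldson, as used in Proposition \ref{b2sm4}), not just semidefiniteness as you state. Second, the instability claim is only gestured at: you need $\deg_g K_{X'}\ge 0$ for \emph{some} Gauduchon metric (then metric-independence for $\Delta=0$ finishes it), and the paper actually produces this by an explicit curvature computation of a volume form on the universal cover $\mathbb C\times H$, carried out separately for Inoue surfaces of types $S^0$ and $S^{\pm}$; ``a standard computation'' should be replaced by that argument or a reference.
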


\begin{proof}
We will use the well known classification of filtrable simple bundles on minimal surfaces with $a(X)=0$. First note that
$X'$ is of class VII by assumption. 
Assume $X$ allows for a filtrable rigid rank $2$ bundle
$\tilde{\cal E}$.
By Proposition \ref{blow} we find a filtrable rigid rank $2$ bundle ${\cal E}$ on $X'$ such that
$\tilde{\cal E}=\pi^*{\cal E}$. This vector bundle ${\cal E}$ is simple and satisfies $\Delta({\cal E})=0$ by Proposition
\ref{imm}. So we can use the classification of those bundles \cite[Thm 4.34]{b95} to obtain $b_2(X')=0$ and $X'$ has no
curves. This is the definition of Inoue surface we use here. 

So it remains to classify filtrable rigid rank $2$ bundles on Inoue surfaces. 
By $b_2(X')=0$ we already know that ${\cal E}$ is simple and topologically trivial. Since ${\cal E}$ is filtrable,
by tensoring with line bundles we may assume that ${\cal E}$ allows for a filtration
$$0\seq{\cal L}\seq{\cal E}\seq{\cal O}_{X'}\seq 0.$$
This is obviously a non-split extension, so
we obtain $H^1(X',{\cal L})\not=0$. On an Inoue surface the intersection form of line
bundles is trivial, hence $\chi({\cal L})=\chi({\cal O}_{X'})=0$; furthermore 
$$H^0({\cal F})\not=0\iff {\cal F}={\cal O}_{X'}$$
for any line bundle, since there are no curves on $X'$. So the condition $H^1({\cal L})\not=0$ translates
into ${\cal L}\in\{K_{X'},{\cal O}_{X'}\}$.

If $L=K_{X'}$ the filtration is 
$$0\seq K_{X'}\seq{\cal E}\seq{\cal O}_{X'}\seq 0.$$
For the non-splitting extension we have
$H^0({\cal E})=0$.
The dual of the filtration implies $H^0({\cal E}^\vee)={\bb C}$, so
if we tensorise this sequence by ${\cal E}$ we obtain
$$0\seq{\cal E}\seq{\cal End}({\cal E})\seq{\cal E}^\vee\seq 0,$$
yielding again that ${\cal E}$ is simple. Tensoring the filtration
and its dual with $K_{X'}$ tells us $H^0(X,{\cal E}\otimes K_{X'})=0=H^0(X,{\cal E}^\vee\otimes K_{X'})$, so also
$$H^2(X,{\cal End}({\cal E}))=H^0(X,{\cal End}({\cal E})\otimes K_{X'})=0.$$
Now we employ
$$0=\chi({\cal End}({\cal E}))=1-h^1(X,{\cal End}({\cal E})).$$
Therefore ${\cal E}$ is rigid.

We are left with the case $L={\cal O}_{X'}$. The non-splitting extension
$$0\seq{\cal O}_{X'}\seq{\cal E}\seq{\cal O}_{X'}\seq 0$$
satisfies ${\cal E}^{\vee}\cong{\cal E}$ and $H^0(X,{\cal E})={\bb C}$. Hence $h^0(X,{\cal End}({\cal E}))\ge 2$, 
so equation \ref{fam} tells us that ${\cal E}$ is not rigid.

Due to Proposition \ref{blow}
it remains to prove the instability of the canonical extension, i.\,e.\,$\deg K_{X}>0$ with respect to any Gauduchon metric. Recall the description
of the various possibilities for Inoue surfaces as given in \cite{i74}. In particular, the universal cover of $X$ is ${\bb C}\times H$, where $H$ denotes 
the upper half plane. It is easy to verify that
$$\Omega_{0}:=-\frac 1{\im w}dz\wedge d\bar z\wedge dw\wedge d\bar w$$
is a volume form on ${\bb C}\times H$ descending to a volume form on an Inoue surface of type $S^{0}$ i.\,e.\,an hermitian metric on $K_{X}^{-1}$.
Its curvature form is
$$\frac {1}{2\pi i}\dd\log\Omega_{0}=-\frac{i}{8\pi(\im w)^{2}}dw\wedge d\bar w.$$
So for any Gauduchon metric $g$ on ${\bb C}\times H$ invariant under $\pi_{1}(X)$ 
one computes on a fundamental domain $D$ of the action of $\pi_{1}(X)$ on ${\bb C}\times H$
$$\deg_{g}K_{X}=-\frac{1}{8\pi}\int_{D} \frac{g_{1\bar 1}}{(\im w)^{2}}dz\wedge d\bar z\wedge dw\wedge d\bar w>0.$$
Similarly, on an Inoue surface of type $S^{+}$ or $S^{-}$ 
$$\Omega:=-\frac{1}{(\im w)^{2}}dz\wedge d\bar z\wedge dw\wedge d\bar w$$
descends to an hermitian metric on $K_{X}^{-1}$ and yields $\deg K_{X}>0$.

\end{proof}

\section{Rigid vector bundles on class VII surfaces}

On a surface with $\chi({\cal O}_X)=0$ and $H^{2}(X,{\bb Z})=0$ the condition $\Delta({\cal E})=0$ simplifies
to $c_2({\cal E})=0$ for any vector bundle ${\cal E}$.
So ${\cal E}$ is simple and topologically trivial, if ${\cal E}$ is rigid,
and the following is just a corollary of Proposition \ref{hopf} below resp. of a result of Moraru \cite[Prop 3.1]{mo04}, stating that there are no simple topologically trivial vector bundles in diagonal Hopf surfaces. 

\begin{cor}\label{nohopf}Let $X$ be a Hopf surface given by diagonal $\varphi$. 
There are no rigid vector bundles on $X$. 
\end{cor}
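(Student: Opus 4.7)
The strategy is to reduce Corollary \ref{nohopf} to Moraru's nonexistence result via the general obstructions already established. Suppose, for contradiction, that $\cal E$ is a rigid holomorphic vector bundle on the diagonal Hopf surface $X$. The first step is to invoke Proposition \ref{imm}, which is applicable because a Hopf surface is a non-K\"ahler compact complex surface: this immediately yields that $\cal E$ is simple and that $\Delta(\cal E)=0$.

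The second step is to translate the discriminant vanishing into topological triviality. A Hopf surface has $b_2(X)=0$, and the torsion part of $H^2(X,{\bb Z})$ has finite order; since $H^4(X,{\bb Z})\cong{\bb Z}$ is torsion-free, any torsion class in $H^2$ squares to zero there, so $c_1(\cal E)^2=0$. The discriminant formula then forces $c_2(\cal E)=0$. On a primary Hopf surface one has $c_1(\cal E)=0$ outright; on a secondary Hopf surface, after tensoring $\cal E$ by a suitable line bundle, one can also arrange $c_1=0$ (this twist preserves both simplicity and rigidity, since tensoring by a line bundle is an equivalence on the category of vector bundles that respects determinant-preserving deformations up to an evident identification). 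In either case $\cal E$ becomes simple and topologically trivial.

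The final step is to invoke the external input: Moraru's \cite[Prop.\,3.1]{mo04} (equivalently Proposition \ref{hopf} below) asserts precisely that on a diagonal Hopf surface there are no simple topologically trivial holomorphic vector bundles. This contradicts the existence of $\cal E$ produced in steps one and two and finishes the argument.

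The main obstacle in this plan is nothing internal to the corollary itself but rather the Moraru-type nonexistence statement, which is what actually distinguishes the diagonal case from the general Hopf setting. Once that is available as a black box, the reduction is entirely formal and relies only on the basic numerical constraints $\chi(\cal O_X)=0$ and the triviality of the intersection pairing on $H^2(X,{\bb Z})$.
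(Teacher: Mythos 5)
Your proposal is correct and follows essentially the same route as the paper: apply Proposition \ref{imm} to get simplicity and $\Delta({\cal E})=0$, deduce topological triviality from $b_{2}(X)=0$ (the paper simply notes $H^{2}(X,{\bb Z})=0$, which holds since a diagonal Hopf surface is primary, so your extra care with torsion and the secondary case is harmless but not needed), and then conclude by Moraru's Prop.\,3.1 exactly as the paper does.
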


A strong generalisation of this for rank $2$ vector bundles will be proved in Theorem \ref{main2}.
We would like to emphasise that in contrast to this result, on higher dimensional Hopf manifolds there are rigid
vector bundles. The following gives an example and some characterisation.

\begin{ex}{}\label{exhopf} Let $X$ be the Hopf manifold defined by the automorphism $\phi(z)=2z$ on ${\bb C}^n\setminus\{0\}$. Then there is a natural
smooth elliptic fibration $\pi:X\seq{\bb P}^{n-1}$. For any vector bundle we have $R^1\pi_*\pi^*{\cal E}={\cal E}^\vee$. 
Let ${\cal E}$ be a a simple vector bundle on ${\bb P}^{n-1}$ with
$H^1({\cal End}({\cal E}))=H^2({\cal End}({\cal E}))=0$. Then the Leray spectral sequence 
implies that $H^1({\cal End}(\pi^*{\cal E}))={\bb C}$ and hence $\pi^*{\cal E}$
is rigid. For instance, $T_{{\bb P}^{n-1}}$ satisfies these conditions for $n\ge 2$. (For $n=2$ exactly the line bundles
satisfy the requirements.)
\end{ex}

Moreover, it is known that $p^*T_{{\bb P}^n}$ is not trivial for $n\ge 2$, if $p:{\bb C}^{n+1}\setminus\{0\}\seq{\bb P}^n$ denotes the natural projection. 
The following results will recover this and give a connection between rigid bundles on some Hopf manifolds and non-trivial
vector bundles on ${\bb C}^n\setminus\{0\}$. 

\begin{prop}\label{hopf}Let $X$ be the Hopf manifold given by the quotient of ${\bb C}^n\setminus\{0\}$ by the automorphism group generated by 
$\varphi(z_1,\dots,,z_n)=(\alpha_1z_1,\dots,\alpha_nz_n)$, $|\alpha_i|>1$ and 
$u:{\bb C}^n\setminus\{0\}\seq X$ the projection. 
If ${\cal E}$ is a rigid vector bundle on $X$, then $u^*{\cal E}$ is not trivial.
\end{prop}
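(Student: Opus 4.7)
The plan is to argue by contradiction. Assume $u^*\cal E$ is trivial on $\tilde X:=\bb C^n\setminus\{0\}$; I will produce a non-trivial determinant preserving deformation of $\cal E$.

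First, a choice of trivialisation $u^*\cal E\cong{\cal O}_{\tilde X}^{\oplus r}$ identifies $\cal E$ with the flat bundle associated to a representation of the fundamental group of $X$, which is cyclic with generator the deck transformation $\varphi$. The monodromy around this generator is a single matrix $A\in GL(r,\bb C)$, and $\cal E$ is realised as the quotient of $\tilde X\times\bb C^r$ by the $\bb Z$-action $(z,v)\mapsto(\varphi(z),Av)$.

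Second, I would perturb $A$ inside the level set $\{B\in GL(r,\bb C):\det B=\det A\}$. For a holomorphic family $\{A_t\}_{t\in\Delta}$ with $A_0=A$ and $\det A_t\equiv\det A$, the analogous quotient of $\tilde X\times\bb C^r\times\Delta$ assembles into a holomorphic bundle $\bb E$ on $X\times\Delta$ with $\bb E|_{X\times\{0\}}=\cal E$. Constancy of $\det A_t$ makes $\det\bb E$ isomorphic to the pullback of $\det\cal E$ from $X$, so $\bb E$ is a small determinant preserving deformation.

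The crux is to arrange that $\bb E$ is not trivial. If it were, each ${\cal E}_{A_t}$ would be isomorphic to $\cal E$, and lifting such an isomorphism to $\tilde X$ produces a holomorphic gauge transformation $P_t:\tilde X\seq GL(r,\bb C)$ with $P_t(\varphi(z))\,A=A_t\,P_t(z)$. Since $n\ge 2$, Hartogs' theorem extends $P_t$ across $0\in\bb C^n$, and evaluating at the fixed point $z=0$ yields $A_t=P_t(0)\,A\,P_t(0)^{-1}$. Hence triviality of $\bb E$ forces the family $\{A_t\}$ to remain in the conjugacy orbit of $A$. This orbit has dimension at most $r^2-r$, the centraliser of any element of $GL(r,\bb C)$ being of dimension at least $r$, whereas $\{B:\det B=\det A\}$ has dimension $r^2-1$; for $r\ge 2$ these strictly differ, so $\{A_t\}$ can be chosen transversely to the orbit. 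The resulting $\bb E$ is then a non-trivial determinant preserving deformation, contradicting rigidity.

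The main obstacle is precisely this last non-triviality step: converting non-conjugacy of monodromy matrices into non-triviality of a holomorphic deformation of vector bundles. Hartogs' extension and the availability of the fixed point $0$ of $\varphi$ are what make it work and explain the role of the hypothesis $n\ge 2$. The rank-$1$ case is genuinely degenerate — every line bundle is vacuously rigid — so the statement is substantive only for $r\ge 2$ (or, equivalently, for $\cal E$ not of the form ${\cal O}_X$ on which $u^*$ is clearly trivial).
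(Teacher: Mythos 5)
Your mechanism for detecting non-triviality --- lift an isomorphism of the deformed bundles to a holomorphic gauge transformation $P_t$ on ${\bb C}^n\setminus\{0\}$, extend it across the origin by Hartogs, and evaluate at the fixed point $0$ of $\varphi$ to conclude that the conjugacy class of the automorphy factor at $0$ is an invariant of the bundle --- is sound, and it is in fact a cleaner justification of non-triviality than the paper spells out. But your first step contains a genuine error: a trivialisation of $u^*{\cal E}$ does \emph{not} identify ${\cal E}$ with a flat bundle. It identifies ${\cal E}$ with a holomorphic automorphy factor $L:{\bb C}^n\setminus\{0\}\seq GL(r,{\bb C})$, the quotient being taken by $(z,v)\mapsto(\varphi(z),L(z)v)$, with $L\cong\tilde L$ iff $\tilde L=(T\circ\varphi)\cdot L\cdot T^{-1}$ for a holomorphic $T$. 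Since the group is infinite cyclic the cocycle condition is vacuous, so $L$ is an arbitrary holomorphic matrix-valued map, and it cannot in general be normalised to a constant: on a diagonal Hopf surface the factor $\left(\begin{smallmatrix}\alpha_1&z_1\\0&1\end{smallmatrix}\right)$ defines a non-split extension of ${\cal O}_X$ by a flat line bundle, and your own Hartogs-plus-evaluation argument shows that any constant matrix conjugate to it would have to be $\mathrm{diag}(\alpha_1,1)$, forcing the bundle to split --- so such bundles are not flat. This is exactly why the paper invokes Mall's normal form, which is upper triangular with constant diagonal but \emph{polynomial} off-diagonal entries. As written, your proof covers only the flat bundles with trivial pullback, not all of them.

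The good news is that the gap is repairable with the tools you already use, without the full normal form: extend $L$ itself across $0$ by Hartogs (its determinant is nowhere zero on ${\bb C}^n\setminus\{0\}$, hence nowhere zero on ${\bb C}^n$, since a zero set would be a hypersurface), set $A:=L(0)$, and deform by $L_t:=C_t\cdot L$ for a path $C_t\in SL(r,{\bb C})$ with $C_0=\Id$ chosen so that $C_tA$ leaves the conjugacy orbit of $A$; your dimension count ($r^2-r<r^2-1$ for $r\ge 2$) makes this possible, $\det L_t=\det L$ keeps the deformation determinant preserving, and evaluation at $0$ shows it is non-trivial. The paper instead perturbs the constant diagonal entries of Mall's normal form by $e^{\pm t}$. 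Your remark that the statement is degenerate for $r=1$ is correct and applies equally to the paper's proof, which tacitly assumes $r\ge 2$.
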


\begin{proof}
There is a complex-valued multiplicative degree $\deg_\varphi$ on monomials in $z_1,\dots,z_n$ via
$$\deg_\varphi(z_i):=\alpha_i.$$

If $u^*{\cal E}$ is trivial, usual techniques allow us to identify ${\cal E}$ of rank $r$ with an equivalence class of holomorphic maps
$L:{\bb C^n}\setminus\{0\}\seq Gl(r,{\bb C})$ where
$$L\cong \tilde L:\iff\,\,\exists T\in{\cal O}({\bb C}^n\setminus\{0\}, Gl(r,{\bb C}))\mbox{ such that }
\tilde L=T\circ\varphi\cdot L\cdot T^{-1}.$$
Choosing $T$ carefully we can achieve a normal form of $L$ consisting of blocks $L_\nu, \nu\in{\bb C}^*$ in upper triangle form (cf. \cite{ma92} for a very similar normal form) with the property
$$(L_\nu)_{kk}=\prod_{j=1}^n\alpha_j^{i_{jk}}\nu$$ 
for $i_{jk}\ge 0, i_{j1}=0$ and 
$$(L_\nu)_{kl}\in{\bb C}[z_1,\dots,z_n]$$
homogeneous with respect to $\deg_\varphi$ and  
$$\deg_\varphi(L_\nu)_{kl}=\prod_{j=1}^n\alpha_j^{i_{jl}-i_{jk}}.$$
Now $L(t)_{kl}:=L_{kl}\mbox{ for }(k,l)\notin\{(1,1),(2,2)\}$ and
$$L(t)_{11}:=\exp(t)L_{11}, L(t)_{22}:=\exp(-t)L_{22}$$
defines a non-trivial determinant preserving deformation of ${\cal E}$. 
\end{proof}

Combining (\ref{hopf}) and (\ref{exhopf}) we obtain immediately

\begin{cor}If $n>1, p:{\bb C}^{n+1}\setminus\{0\}\seq{\bb P}^n$ is the natural projection and ${\cal E}$ a simple vector bundle on ${\bb P}^n$ satisfying
$\rk{\cal E}>1$ and $H^1({\cal End}({\cal E}))=H^2({\cal End}({\cal E}))=0$, then $p^*{\cal E}$ is not trivial.
\end{cor}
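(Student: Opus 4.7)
The plan is to combine Proposition \ref{hopf} with Example \ref{exhopf} by factoring the natural projection $p$ through the diagonal Hopf manifold built from the action $z\mapsto 2z$ on ${\bb C}^{n+1}\setminus\{0\}$.

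Concretely, I would introduce the Hopf manifold $X:=({\bb C}^{n+1}\setminus\{0\})/\langle\phi\rangle$ with $\phi(z_0,\dots,z_n)=(2z_0,\dots,2z_n)$, denote by $u:{\bb C}^{n+1}\setminus\{0\}\seq X$ the quotient map, and take $\pi:X\seq{\bb P}^n$ to be the smooth elliptic fibration from Example \ref{exhopf} (applied in dimension $n+1$). The key observation is the commutativity
$$p=\pi\circ u,$$
which is just the defining property of the projection ${\bb C}^{n+1}\setminus\{0\}\seq{\bb P}^n$ together with the fact that $\phi$ preserves every line through the origin. Under the stated hypotheses on ${\cal E}$ (simple on ${\bb P}^n$ with $H^1({\cal End}({\cal E}))=H^2({\cal End}({\cal E}))=0$), Example \ref{exhopf} tells me that $\pi^*{\cal E}$ is rigid on $X$. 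Since $X$ fits the framework of Proposition \ref{hopf} with $\alpha_i=2$ for all $i$, and since $\rk\pi^*{\cal E}=\rk{\cal E}>1$ is inherited from the hypothesis, Proposition \ref{hopf} yields that $u^*(\pi^*{\cal E})$ is not trivial. Finally, $u^*(\pi^*{\cal E})=(\pi\circ u)^*{\cal E}=p^*{\cal E}$ delivers the conclusion.

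There is essentially no obstacle: the statement is a direct corollary, and the proof is bookkeeping about the factorisation. The role of the hypothesis $\rk{\cal E}>1$ is precisely to ensure that the normal-form deformation constructed in the proof of Proposition \ref{hopf} — which acts on a pair of distinct diagonal positions $(1,1)$ and $(2,2)$ — is available, so that triviality of $u^*(\pi^*{\cal E})$ would genuinely contradict rigidity of $\pi^*{\cal E}$.
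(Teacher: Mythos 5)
Your proof is correct and is exactly the argument the paper intends: it combines Example \ref{exhopf} (rigidity of $\pi^*{\cal E}$ on the Hopf manifold) with Proposition \ref{hopf} via the factorisation $p=\pi\circ u$, and your remark on the role of $\rk{\cal E}>1$ matches why the deformation in the proof of Proposition \ref{hopf} needs two diagonal entries.
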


Let us finish with a consideration of the cases $b_{2}(X)<4$. We denote by $c_{i}^{\bb Q}({\cal E})$ the images of the Chern classes
of ${\cal E}$ in $H^{2i}(X,{\bb Q})=H^{2i}(X,{\bb Z})\otimes_{\bb Z}{\bb Q}$.

\begin{prop}\label{b2sm4}If $X$ is a minimal surface of class VII with 
$b_{2}(X)<4$, then any rigid rank $2$ bundle ${\cal E}$ satisfies $c_{1}^{\bb Q}({\cal E})=0$ and $c_{2}^{\bb Q}({\cal E})=0$,
after tensoring with a line bundle.
\end{prop}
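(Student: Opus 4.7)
The plan is to convert $\Delta(\mathcal{E})=0$ into an arithmetic divisibility condition on $c_1(\mathcal{E})$ with respect to an orthogonal basis of $H^2(X,\mathbb{Z})/\mathrm{tors}$, and to observe that this condition is automatic when $b_2(X)<4$. First, by Proposition~\ref{imm} rigidity forces $\Delta(\mathcal{E})=0$, i.e.\ $c_1(\mathcal{E})^2=4\,c_2(\mathcal{E})$ in $H^4(X,\mathbb{Z})\cong\mathbb{Z}$. Since $X$ is class VII with $b_1(X)=1$ and $h^{1,0}(X)=0$, one has $h^{0,1}(X)=1$ and consequently $H^2(X,\mathcal{O}_X)=0$; the exponential sequence then makes $c_1:\mathrm{Pic}(X)\to H^2(X,\mathbb{Z})$ surjective, so I am free to shift $c_1(\mathcal{E})$ by $2c_1(L)$ for any line bundle $L$.

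Next I analyse the lattice $H^2(X,\mathbb{Z})/\mathrm{tors}$. Noether's formula combined with $\chi(\mathcal{O}_X)=0$ gives $K_X^2=-b_2(X)$, and the Hirzebruch signature formula then yields $\sigma(X)=-b_2(X)$, so $b^+(X)=0$; the intersection form is therefore negative definite and unimodular. The only negative-definite unimodular lattice of rank below eight being the diagonal one, the form is isometric to $\langle-1\rangle^{b_2(X)}$. Fix an orthogonal basis $e_1,\ldots,e_{b_2}$ with $e_i^2=-1$ and write $c_1(\mathcal{E})\equiv\sum_i a_ie_i\pmod{\mathrm{tors}}$ for some integers $a_i$.

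The identity $-\sum_i a_i^2=c_1(\mathcal{E})^2=4c_2(\mathcal{E})$ then forces $\sum_i a_i^2\equiv 0\pmod 4$. Since $a_i^2\bmod 4\in\{0,1\}$, the number of odd coefficients must be divisible by four, and under the hypothesis $b_2(X)<4$ this forces every $a_i$ to be even. Lift $\sum_i(a_i/2)e_i$ to some $m\in H^2(X,\mathbb{Z})$ and choose $L\in\mathrm{Pic}(X)$ with $c_1(L)=-m$. Then $c_1(\mathcal{E}\otimes L)=c_1(\mathcal{E})-2m$ is torsion, so $c_1^{\mathbb{Q}}(\mathcal{E}\otimes L)=0$. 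Expanding $c_2(\mathcal{E}\otimes L)=c_2(\mathcal{E})+c_1(\mathcal{E})\cdot c_1(L)+c_1(L)^2$ and using $c_1(\mathcal{E})^2=4m^2$ (torsion pairings in $H^4=\mathbb{Z}$ vanishing) together with $c_2(\mathcal{E})=m^2$ gives $c_2(\mathcal{E}\otimes L)=0$, completing the conclusion.

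The real content sits in the arithmetic above: noticing that $\Delta(\mathcal{E})=0$ is equivalent to the mod-$4$ congruence $\sum a_i^2\equiv 0$, and that this congruence forces every $a_i$ to be even only when fewer than four coefficients are involved. That is also what singles out $b_2(X)=4$ as the first case beyond reach --- four simultaneously odd coordinates satisfy the congruence --- and explains on the nose why the restriction $b_2<4$ appears in the statement.
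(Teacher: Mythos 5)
Your argument is correct and follows essentially the same route as the paper: diagonalise the (negative definite, unimodular) intersection form, observe that $\Delta({\cal E})=0$ forces the number of odd coordinates of $c_{1}({\cal E})$ to be divisible by $4$, hence zero when $b_{2}(X)<4$, and then absorb the even class into a line bundle twist using $\m{NS}(X)=H^{2}(X,{\bb Z})$. The only (harmless) difference is that you justify diagonality via the classification of definite unimodular lattices of rank $<8$, whereas the paper invokes Donaldson's theorem; both are valid here.
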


\begin{proof}If $b_{2}(X)=0$, then any vector bundle with $\Delta({\cal E})=0$ is topologically trivial. 
So we assume $b_{2}(X)>0$. 
By Donaldson's first theorem \cite{d87}, the intersection form on $H^{2}(X,{\bb Z})^{\vee\vee}\times H^{2}(X,{\bb Z})^{\vee\vee}\seq{\bb Z}$ is trivial,
i.\,e.\,there is a ${\bb Z}$-basis $e_{i}$ such that
$$e_{i}.e_{j}=-\delta_{ij}.$$
We write also $e_{i}$ for their images in $H^{2}(X,{\bb Q})$ and define integers $\alpha_{i}$ by
$$c_{1}^{\bb Q}({\cal E})=:\sum_{i=1}^{b_{2}(X)}\alpha_{i}e_{i}.$$
Any rigid rank $2$ bundle ${\cal E}$ satisfies $\Delta({\cal E})=0$, so
$\frac 14c_{1}^{\bb Q}({\cal E})^{2}=-c_{2}^{\bb Q}({\cal E})\in{\bb Z}$,
what translates to
$\frac 14|\{i|\,\,\alpha_{i}\text{ odd }\}|\in{\bb Z}$.
Using $b_{2}(X)<4$, we conclude that this number is zero. Now we use $\m{NS}(X)=H^{2}(X,{\bb Z})$, so we
can tensorise with a line bundle ${\cal L}$ such that $c_{1}^{\bb Q}(\cal L)=-\frac 12c_{1}^{\bb Q}({\cal E})$ and obtain $c_{1}^{\bb Q}({\cal E}\otimes{\cal L})=0$ and thus also $c_{2}^{\bb Q}({\cal E}\otimes{\cal L})=0$.
\end{proof}

Proposition \ref{b2sm4} is strong enough to exclude all surfaces allowing for a cycle with $b_{2}(X)<4$. 
Here, by cycle we mean a cycle of rational curves or a smooth elliptic curve.

\begin{thm}\label{main2}On a minimal surface $X$ of class VII with $b_{2}(X)< 4$ allowing for a cycle there is no rigid rank $2$ vector bundle.
\end{thm}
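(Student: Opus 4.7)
My strategy is to combine Proposition \ref{b2sm4} (numerical triviality) with Theorem \ref{main1} (no filtrable rigid bundles here) and then use the cycle to obstruct the only remaining non-filtrable possibility.

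Suppose for contradiction that $\cal E$ is a rigid rank $2$ bundle on $X$. By Proposition \ref{b2sm4}, after tensoring with a suitable line bundle we may assume $c_1^{\bb Q}(\cal E)=0=c_2^{\bb Q}(\cal E)$. Hence $\deg_g\cal E=0$ for every Gauduchon metric $g$, and $\det\cal E$ has torsion first Chern class and is therefore a flat line bundle; using that $\m{Pic}^0(X)\cong\bb C^*$ is divisible, a further square-root twist brings us to $\det\cal E=\cal O_X$. Next, by Theorem \ref{main1} a filtrable rigid rank $2$ bundle on a minimal non-K\"ahler surface with $a(X)=0$ forces the minimal model to be Inoue, hence to carry no curves; the presence of a cycle on $X$ thus forbids $\cal E$ from being filtrable, so $\cal E$ is non-filtrable, in particular simple and $g$-stable.

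The cleanest route to a contradiction is via the Kobayashi--Hitchin correspondence for Gauduchon metrics (Li--Yau, L\"ubke--Teleman). Stability, $\Delta(\cal E)=0$, $\deg_g\cal E=0$ and $\det\cal E=\cal O_X$ together yield a flat $SL(2,\bb C)$-structure on $\cal E$, equivalently an irreducible representation $\rho:\pi_1(X)\seq SL(2,\bb C)$. The results of Teleman \cite{te1,te2} (and the analogous treatment for $b_2(X)=3$) identify a minimal class VII surface with a cycle and $b_2(X)<4$ as a Kato surface, so $\pi_1(X)\cong\bb Z$. But every element of $SL(2,\bb C)$ has a one-dimensional invariant subspace, so $\bb Z$ admits no irreducible two-dimensional complex representation. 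This is the sought contradiction.

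The main obstacle is justifying the identification $\pi_1(X)\cong\bb Z$ uniformly across the surfaces in question, especially for $b_2(X)=3$, where the classification of minimal class VII surfaces is less complete. A more elementary alternative that avoids invoking the Kobayashi--Hitchin machinery would restrict $\cal E$ to the cycle $C$ and use the structure of rank $2$ bundles of trivial determinant on a curve of arithmetic genus one (Atiyah's classification in the elliptic case; explicit $SL(2,\bb C)$ gluing data along the nodes in the nodal case) to exhibit a line sub-bundle $\cal L\hookrightarrow\cal E|_C$. The difficulty then shifts to lifting $\cal L$ to a line sub-sheaf of $\cal E$ on $X$ via the restriction sequence $0\seq\cal E(-C)\seq\cal E\seq\cal E|_C\seq 0$, which boils down to controlling $H^1(X,\tilde{\cal L}^{-1}\otimes\cal E(-C))$ for a line bundle $\tilde{\cal L}$ on $X$ extending $\cal L$.
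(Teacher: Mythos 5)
Your proposal takes a genuinely different route (Kobayashi--Hitchin plus $\pi_1$), but it has two gaps that are not cosmetic. First, the normalisation step. On a non-K\"ahler surface $\deg_g$ is \emph{not} a topological invariant, so $c_1^{\bb Q}({\cal E})=0$ does not give $\deg_g{\cal E}=0$; you recover this only after arranging $\det{\cal E}={\cal O}_X$, but that arrangement is itself problematic: Proposition \ref{b2sm4} only makes $c_1(\det{\cal E})$ a \emph{torsion} class, and $H^2(X,{\bb Z})$ of a class VII surface can have $2$-torsion (half Inoue surfaces are exactly such examples), so $\det{\cal E}$ need not be a square in $\m{Pic}(X)$ even though $\m{Pic}^0(X)\cong{\bb C}^*$ is divisible. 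Without trivial determinant the correspondence only yields a \emph{projectively} flat structure, i.e.\ an irreducible $PU(2)$-representation, and an abelian group with $2$-torsion does admit irreducible projective rank-$2$ representations (the Klein four-group sits irreducibly in $PU(2)$). This is precisely the borderline case the paper has to treat by hand: on a half Inoue surface the $2$-torsion bundle $R=K_X\otimes{\cal O}_X(C)$ produces a one-dimensional $H^2({\cal End}({\cal E})\otimes{\cal O}_X(-C))$, and the contradiction has to be extracted from the sharper bound $h^0(\m{ad}({\cal E}|_C))\le 1$ versus $\ge 2$. Second, the input $\pi_1(X)\cong{\bb Z}$ is misattributed: Teleman's theorems \emph{produce} cycles on minimal class VII surfaces with $b_2\le 2$, they do not classify surfaces already possessing one; to get $\pi_1\cong{\bb Z}$ from the existence of a cycle of rational curves you would need Nakamura's deformation theorem (or the GSS conjecture), which is not uniform over $b_2=3$, and for $b_2=0$ the cycle may be a smooth elliptic curve on a secondary Hopf surface, whose fundamental group need not be ${\bb Z}$ nor even abelian. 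You also never verify $a(X)=0$ before invoking Theorem \ref{main1}, which matters exactly in that Hopf case.

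For comparison, the paper argues entirely cohomologically along the cycle: Proposition \ref{b2sm4} makes ${\cal E}|_C$ topologically trivial, non-filtrability (from Theorem \ref{main1}) forces every homomorphism ${\cal E}\seq{\cal E}\otimes K_X\otimes{\cal O}(C)$ to vanish unless $K_X^2\otimes{\cal O}(2C)$ is effective, Serre duality and Riemann--Roch then kill all cohomology of ${\cal End}({\cal E})\otimes{\cal O}(-C)$, and restriction to $C$ contradicts $h^0(\m{ad}({\cal F}))\ge 2$ for topologically trivial rank-$2$ bundles on a cycle. Hence $K_X^2\otimes{\cal O}(2C)$ is effective, which gives $C^2\le -b_2(X)$, and Nakamura's theorems reduce everything to half Inoue and Hopf surfaces, each excluded by a sharpened version of the same computation. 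The ``more elementary alternative'' you sketch in your last paragraph is in fact essentially this argument, but you leave exactly the hard part (controlling $H^1$ of the twisted endomorphism bundle, i.e.\ the effectivity dichotomy for $K_X^2\otimes{\cal O}(2C)$) unexecuted. As it stands, the proposal does not constitute a proof.
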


\begin{proof}
The statement will follow by
a theorem of Nakamura if we can prove $C^{2}\le -b_{2}(X)$ and $a(X)=0$. Since the only class VII surfaces with $a(X)=1$ are elliptic
Hopf surfaces and these do not allow for rigid rank $2$ bundles by Corollary \ref{nohopf} we know immediately $a(X)=0$.

We will use the cycle $C$ in a similar manner as Moraru in \cite[Prop 3.1]{mo04}. By Proposition \ref{b2sm4} the vector bundle ${\cal E}|_{C}$ is
topologically trivial. Theorem \ref{main1} tells us that ${\cal E}$ is not filtrable, so any
homomorphism
$$s:{\cal E}\seq{\cal E}\otimes K_{X}\otimes{\cal O}(C)$$
has to vanish if $H^{0}(K_{X}^{2}\otimes{\cal O}(2C))=0$. In this case, $H^{0}({\cal End}({\cal E})\otimes{\cal O}(-C))=0=H^{2}({\cal End}({\cal E})\otimes{\cal O}(-C))$ and Riemann-Roch implies $\chi({\cal End}({\cal E})\otimes{\cal O}(-C))=0$, hence also
$H^{1}({\cal End}({\cal E})\otimes{\cal O}(-C))=0$. The sequence
$$0\seq{\cal End}({\cal E})\otimes{\cal O}(-C)\seq{\cal End}({\cal E})\seq{\cal End}({\cal E})|_{C}\seq 0$$
now implies
$$H^{0}(\m{ad}({\cal E}|_{C}))=0.$$
This, however, contradicts $h^{0}(\m{ad}({\cal F}))\ge 2$ for any topologically trivial rank $2$ bundle ${\cal F}$ on a cycle $C$.

So we know that $K_{X}^{2}\otimes{\cal O}(2C)$ is effective for any cycle $C$. This chain of arguments will be called the cohomological argument
in the sequel of the proof.
We recall (\cite[Lemma 1.2]{te2}) that by minimality of $X$ and negative definiteness of the
intersection form, for any effective divisor $D$ 
$$K_{X}.D\ge 0.$$
We apply this to $K_{X}^{2}\otimes{\cal O}(2C)$ and obtain $K_{X}.C\ge b_{2}(X)$ and subsequently
$$0\ge(K_{X}+C)^{2}\ge b_{2}(X)+C^{2},$$
hence $C^{2}\le -b_{2}(X)$. Theorems of Nakamura \cite[9.2,10.1]{na1} tell us that $X$ must be a half Inoue or a Hopf surface. 

Let us assume that $X$ is half Inoue. Then
there is a unique cycle $C$ and
$$K_{X}^{2}\otimes{\cal O}_{X}(2C)={\cal O}_{X}.$$
Note that our contradiction in the cohomological argument
was not sharp. We will exploit this gap now. Let us denote $R:=K_{X}\otimes{\cal O}_{X}(C)$, the unique $2$-torsion point of
$\m{Pic}^{0}(X)={\bb C}^{*}$, and $s:{\cal E}\seq{\cal E}\otimes R$ a vector bundle homomorphism as above. Taking the determinant tells us
that $s$ has to be a vector bundle isomorphism. Since ${\cal E}$ is simple, there is only one (up to scale), so we obtain
$$H^{2}({\cal End}({\cal E})\otimes{\cal O}_{X}(-C))={\bb C}$$
and the cohomological argument yields $h^{0}(\m{ad}({\cal E}|_{C}))\le 1$, still a contradiction (and this time it is sharp). 

On a Hopf surface, according to Corollary \ref{nohopf} we have only to deal with the case that there is exactly one elliptic curve $C$ and
$K_{X}={\cal O}_{X}(-mC)$ for some $m\ge 1$. Since $K_{X}^{2}\otimes{\cal O}_{X}(2C)$ is effective, we conclude $m=1$. 
So $K_{X}^{-1}={\cal O}_{X}(C)$
is the line bundle associated to a reduced, effective divisor and we obtain similarly by looking at homomorphisms
$$s:{\cal E}\seq{\cal E}\otimes K_{X},$$
by Serre duality and Riemann-Roch (for the first cohomology) 
$$H^{i}(\m{ad}({\cal E}))=H^{i}(\m{ad}({\cal E})\otimes K_{X})=0.$$ 
The rest of the cohomological argument applies and yields a contradiction.
\end{proof}

\begin{rem}
The gap of knowledge for $b_{2}(X)\ge 4$ stems from the lack of techniques to construct non-filtrable bundles. Apart from
pushing down line bundles on a double cover (cp. the result of Aprodu and Toma on elliptic surfaces \cite{at03}) there are
no general methods. Also note that by \cite{tt02} the Chern classes of filtrable rank $2$ bundles are the same as of non-filtrable if
$X$ allows for a cycle.
\end{rem}

\section{Rigid rank $2$ bundles on non-K\"ahler elliptic surfaces}

If $a(X)=1$, then $X$ is an elliptic surface. Here, we do not assume the existence of a section when we say elliptic fibration or elliptic surface.
We refer to \cite[Chapter 2]{bm05} for the results about rank $2$ bundles on elliptic surfaces used here.

\begin{thm}\label{main3}
If a minimal non-K\"ahler elliptic surface $X\stackrel{\pi}{\seq}B$ with smooth fibre $T$ admits a non-filtrable rigid rank $2$ bundle, then 
$X$ is a primary Kodaira surface and $B$ is isogenous to $T^{\vee}$.
\end{thm}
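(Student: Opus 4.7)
My plan is to proceed via the classification of minimal non-K\"ahler elliptic surfaces. These are the elliptic Hopf surfaces ($\kappa=-\infty$), the primary and secondary Kodaira surfaces ($\kappa=0$), and the properly elliptic surfaces ($\kappa=1$). Corollary \ref{nohopf}, extended to arbitrary elliptic Hopf surfaces via the same normal-form argument as in Proposition \ref{hopf}, excludes the Hopf family. By Proposition \ref{imm} and formula (\ref{fam}), rigidity forces $\mathcal{E}$ simple with $\Delta(\mathcal{E})=0$, so (\ref{fam}) collapses to $h^1(\m{ad}(\mathcal{E}))=h^2(\m{ad}(\mathcal{E}))$; rigidity is therefore equivalent to the vanishing of $H^0(\m{ad}(\mathcal{E})\otimes K_X)$ by Serre duality. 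The task thus reduces to excluding the $\kappa=1$ and secondary Kodaira cases and to extracting the isogeny condition in the primary Kodaira case.

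For a properly elliptic non-K\"ahler surface the canonical bundle formula yields $K_X=\pi^{*}L\otimes\mathcal{O}_X(\sum a_iF_i)$ with $\deg L>0$, so some power $K_X^{\otimes m}$ has a non-zero section pulled back from $B$. On the generic smooth fibre $\mathcal{E}|_{F_b}$ is semistable of degree $0$, and non-filtrability of $\mathcal{E}$ forces $\mathcal{E}|_{F_b}$ to be strictly semistable generically, yielding a holomorphic one-parameter family of trace-free fibrewise endomorphisms. Multiplying by a vertical pluricanonical section produces a non-zero element of $H^0(\m{ad}(\mathcal{E})\otimes K_X^{\otimes m})$; an elementary-modification or iteration step then transfers the obstruction to $H^0(\m{ad}(\mathcal{E})\otimes K_X)$, contradicting rigidity.

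For a secondary Kodaira surface with $K_X$ of order $n\in\{2,3,4,6\}$, I would pass to the \'etale $n$-fold primary Kodaira cover $q\colon\tilde{X}\to X$. The pullback $q^{*}\mathcal{E}$ is again non-filtrable with $\Delta=0$ and is therefore rigid by Lemma \ref{nonfilt} (when $n\ne 2$); by the primary Kodaira step below the base $\tilde{B}$ of $\tilde{X}$ must be isogenous to the dual fibre $\tilde{T}^{\vee}$, and the deck-group action on the spectral double cover $\tilde{C}\subset J(\tilde{X}/\tilde{B})$ conflicts with the principal $\tilde{T}^{\vee}$-bundle structure and prevents $\tilde{C}$ from descending to a spectral datum on $X$. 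The excluded $n=2$ case is treated separately by noting that any non-zero $s\colon\mathcal{E}\to\mathcal{E}\otimes K_X$ has $\det s\in H^0(K_X^{\otimes 2})=\bb C$, hence $s$ is a fibrewise isomorphism and splits $\mathcal{E}$ into a direct sum of line bundles, contradicting non-filtrability.

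Finally, on a primary Kodaira surface $K_X=\mathcal{O}_X$, so Lemma \ref{nonfilt} automatically gives the rigidity of any non-filtrable $\mathcal{E}$ with $\Delta=0$. Since $\Delta(\mathcal{E})=0$, the restriction satisfies $\mathcal{E}|_{F_b}\cong\mathcal{L}_b\oplus\mathcal{L}_b^{-1}$ with $\mathcal{L}_b\in\m{Pic}^0(F_b)\cong T^{\vee}$, and non-filtrability forces $\mathcal{L}_b\not\cong\mathcal{L}_b^{-1}$ generically; the spectral construction then assembles these data into an irreducible unramified double cover $\tilde{C}\subset J(X/B)\to B$. Since $J(X/B)\to B$ is itself a principal $T^{\vee}$-bundle, $\tilde{C}$ is an elliptic curve with non-constant projections $\tilde{C}\to B$ (the double cover) and $\tilde{C}\to T^{\vee}$ (the fibre coordinate), both of which must be isogenies; transitivity delivers the required isogeny between $B$ and $T^{\vee}$. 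The hardest step I anticipate is the $\kappa(X)=1$ case, where extracting a genuine traceless section of $\m{ad}(\mathcal{E})\otimes K_X$ out of the fibrewise semistability requires more than the naive traceful choice $\m{id}_\mathcal{E}\otimes\sigma$.
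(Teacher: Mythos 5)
Your skeleton agrees with the paper's in two places: elliptic Hopf surfaces are excluded by Corollary \ref{nohopf}, and in the primary Kodaira case the isogeny between $B$ and $T^{\vee}$ is read off from the two projections of the irreducible, unramified (hence elliptic) spectral double cover $C\subset J(X)=B\times T^{\vee}$. But both of your exclusion steps have genuine gaps. For properly elliptic surfaces your argument rests on the claimed equivalence ``rigid $\iff H^{0}(\m{ad}({\cal E})\otimes K_{X})=0$''. Only one direction holds: since $\Delta({\cal E})=0$ and $\chi({\cal O}_{X})=0$ force $h^{1}(\m{ad}({\cal E}))=h^{2}(\m{ad}({\cal E}))$, the expected dimension of the Kuranishi space is zero, so $h^{2}\neq 0$ does not by itself yield a non-trivial deformation over a reduced germ $(T,0)$ and therefore does not contradict rigidity as defined in the paper. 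Moreover, the proposed ``transfer'' of a section of $\m{ad}({\cal E})\otimes K_{X}^{\otimes m}$ down to $\m{ad}({\cal E})\otimes K_{X}$ is not explained and has no general mechanism behind it. The paper does not case-split on $\kappa(X)$ at all: after showing that $\Delta({\cal E})=0$ forbids jumps (an unstable fibre restriction would give an elementary modification of negative discriminant), the spectral curve of a non-filtrable ${\cal E}$ is a smooth unramified double cover $C\to B$ of genus $2g(B)-1$, and by \cite[Prop.\ 4.6]{bm05} the fibre of the graph map through ${\cal E}$ contains copies of $\m{Prym}(C/B)$, of dimension $g(B)-1$; for $g(B)\ge 2$ this is an honest positive-dimensional determinant-preserving family, so rigidity forces $g(B)\le 1$ and the classification then leaves only Hopf and Kodaira surfaces.

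For secondary Kodaira surfaces your argument is also incomplete. The decisive step --- that the deck action ``prevents $\tilde C$ from descending'' --- is asserted, not proved; you would also need to justify that $q^{*}{\cal E}$ stays non-filtrable on the \'etale cover. Worse, the auxiliary $n=2$ argument establishes $H^{0}({\cal End}({\cal E})\otimes K_{X})=0$, i.e.\ it shows that such a bundle \emph{would be} rigid, which is the opposite of what the theorem requires (non-existence of non-filtrable rigid bundles on secondary Kodaira surfaces). The paper's argument here is one line: the base is ${\bb P}^{1}$, an unramified double cover of ${\bb P}^{1}$ is disconnected, so the spectral curve is a union of two disjoint rational sections and ${\cal E}$ is filtrable, a contradiction. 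I recommend replacing both of your exclusion steps by the jump/Prym-fibre argument, which treats every base of genus at least $2$ uniformly and reduces the rest to the Hopf and Kodaira cases you already handle correctly.
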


\begin{proof}
It is well-known that any relatively minimal non-K\"ahler elliptic surface $X\stackrel{\pi}{\seq}B$ 
is a quasi-bundle (i.\,e.\,all smooth fibres are isomorphic) with at most multiple fibres
as singular fibres. By going over to a cyclic covering one can get rid of the multiplicity of the fibre and is able to define the notion of a
jump at $b\in B$ as the situation that ${\cal E}|_{X_{b}}$ (after the cyclic covering, if necessary) is unstable. Note that by negative semidefiniteness
of the intersection form any vector bundle on $X$ restricted to a fibre has trivial first Chern class. If ${\cal E}|_{X_{b}}$ is unstable, one obtains
a splitting sequence
$$0\seq\lambda^{\vee}\otimes\det{\cal E}|_{X_{b}}\seq{\cal E}|_{X_{b}}\seq\lambda\seq 0$$
for a line bundle $\lambda\in \m{Pic}(X_{b})$ of degree $-h$ for some $h>0$, called height of the jump. 
This gives rise to an elementary modification $\tilde{\cal E}$ of ${\cal E}$ with discriminant
$$\Delta(\tilde{\cal E})=\Delta({\cal E})-\frac 12h.$$
So we infer that a rigid rank $2$ bundle has no jumps. Thus, the spectral curve $C$ of ${\cal E}$ is an irreducible double cover of $B$ or the
double of a section of the Jacobian. The spectral
curve $C$ is a curve in the Jacobian $J(X)=B\times T^{\vee}\stackrel{p}{\seq}B'$, where $T$ is the generic smooth fibre of $\pi$, with the property that $C\cap p^{-1}(b)$ consists of the direct factors of ${\cal E}|_{X_{b}}$ for generic $b$ and ${\cal E}$. 
The branching locus of $C\seq B$ is computed in the proof
of Lemma 3.10 of \cite{bm05b} (independently of the smoothness of $C$) and has order $8\Delta({\cal E})$. Hence, $C$ is the double of a smooth section
of $J(X)$ or an unramified double cover
of $B$. In the first case, ${\cal E}$ is filtrable, whereas in the second case, ${\cal E}$ is nonfiltrable. In particular, the reduced divisor $C$ is smooth in
any case and has genus $g(C_{red})=g$ in the first case and $g(C)=2g-1$ in the second case. 

In our case we have a smooth curve $C$ of genus $2g-1$.
An important tool in classifying stable rank $2$ bundles is the graph map. Instead of associating $C$ to $E$ we go one step further and note that
$C$ is invariant under the involution $\iota_{\delta_{b}}:\lambda\mapsto\lambda^{\vee}\otimes\delta$ of $\m{Pic}^{0}(X_{b})$ for $\delta_{b}:=\det{\cal E|_{X_{b}}}\in \m{Pic}^{0}(X_{b})$. So,
${\cal E}$ induces a so called graph $A$ as an element of a certain numerical linear system in the ruled surface $F:=J(X)/\iota_{\delta}$. In our case, this is just a section of the ruled surface. The mentioned numerical linear system is denoted by ${\bb P}_{\delta,c_{2}}$. The graph map
$$G:{\cal M}_{\delta,c_{2}}\seq {\bb P}_{\delta,c_{2}}$$
is simply given by mapping a stable rank $2$ vector bundle ${\cal E}$ with $\det{\cal E}=\delta, c_{2}({\cal E})=c_{2}$ to its graph.	

In our case, ${\cal E}$ is stable, since it is non-filtrable. So we are interested in the fibre of the graph map. By \cite[Prop. 4.6]{bm05}
the fibre consists of copies of the Prym variety $\m{Prym}(C/B)$.  The Prym variety has dimension $g(C)-g(B)=g-1$, so we obtain
a determinant preserving deformation of ${\cal E}$ if $g\ge 2$. If $g\le 1$, then $X$ is a Hopf or a Kodaira surface. We already know by Corollary \ref{nohopf} 
that elliptic Hopf surfaces do not allow for rigid rank $2$ bundles.

So let $X$ be a Kodaira surface. If $X$ is a primary Kodaira surface, looking at the projections of $C$ to the factors $B$ and $T^{\vee}$ we see that necessarily $B$ and $T^{\vee}$ must be isogenous.
If $X$ is secondary Kodaira, then $C=L_{1}+L_{2}$ is the sum of two disjoint rational curves what contradicts to the non-filtrabilty of ${\cal E}$.
\end{proof}

\begin{ex}{(A non-filtrable rigid vector bundle on a Kodaira surface)}\label{exkod}
Any double cover $C\hookrightarrow B\times T^{\vee}\seq B$ is invariant under $\iota_{\delta}$ for
any section $\delta$ such that $\delta_{b}$ is the sum of the fibre elements of $C\seq B$ over $b$.
By \cite[Prop. 2.3]{bm05b} any section $\delta$ comes from a line bundle and by \cite[Prop. 4.4, Lemma 3.10]{bm05b} we have a non-filtrable rank $2$ bundle ${\cal E}$ with
$\Delta({\cal E})=0$ exactly if there is a smooth, unramified double cover $C$ of $B$ embeddable into $B\times T^{\vee}$ over $B$, 
where $T$ is the smooth fibre
of $\pi$.
 
Let $B:={\bb C}/\langle 1,p\tau\rangle, C:={\bb C}/\langle 1,2p\tau\rangle$ and $T^{\vee}:={\bb C}/\langle 1,2\tau\rangle$ for an odd integer
$p$ and any complex, non-real number $\tau$. There is a primary Kodaira surface over $B$ with fibre $T$, an unramified double cover $C\stackrel{\delta}{\seq}B$ induced by
the projection from ${\bb C}$, similarly an unramified $p$-fold cover $C\stackrel{\tau}{\seq}T^{\vee}$ and an embedding of $\iota:C\seq B\times T^{\vee}$ over $B$ via $ \iota(x):=(\delta(x),\tau(x))$. This induces a non-filtrable rank $2$ bundle with $\Delta({\cal E})=0$. By Lemma \ref{nonfilt}
${\cal E}$ is rigid.
\end{ex}

So we are left with the case that $C$ is the double of a section of $J(X)$. This means that we have an exact sequence
\begin{equation}\label{2sec}0\seq {\cal O}_X(\sum a_iX_{b_i})\seq{\cal E}\seq\pi^{*}{\cal L}\seq 0\end{equation}
for integers $0\le a_i\le m_i-1$ and some line bundle ${\cal L}$ on $B$, after tensoring ${\cal E}$ with a line bundle; here
the $X_{b_i}$ are the reduced fibres over $b_i$ with multiplicity $m_i$, i.\,e.\,$\pi^*{\cal O}_B(b_i)={\cal O}_X(m_iX_{b_i})$. 
This sequence either splits on every fibre or it does so at most at finitely many fibres. 

\begin{prop}\label{split}For every filtrable simple rank $2$ bundle ${\cal E}$ on $X$ with $\Delta({\cal E})=0$ the sequence (\ref{2sec}) splits at every fibre and $\deg{\cal L}\ge 2-g(B)$.
\end{prop}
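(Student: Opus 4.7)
The plan is to reduce both claims to cohomological statements on the curve $B$ via pushforward, with simplicity supplying the crucial vanishing inputs. The basic observation is that for $b \notin \{b_i\}$ each term in (\ref{2sec}) restricts trivially to $X_b$: $L_1|_{X_b} = {\cal O}_{X_b}$ because $X_{b_j}$ is disjoint from $X_b$, and $L_2|_{X_b} = {\cal O}_{X_b}$ because $L_2 = \pi^*{\cal L}$. So the generic fibre sequence is
$$0 \to {\cal O}_{X_b} \to {\cal E}|_{X_b} \to {\cal O}_{X_b} \to 0,$$
either split or yielding the Atiyah bundle $F_2$. By the projection formula $H^0(X, L_1 \otimes L_2^{-1}) = H^0(B, {\cal L}^{-1})$ (since $\pi_* L_1 = {\cal O}_B$ because $0 \le a_i < m_i$), and a nonzero section $s$ would produce a non-scalar endomorphism ${\cal E} \twoheadrightarrow L_2 \xrightarrow{s} L_1 \hookrightarrow {\cal E}$ of ${\cal E}$, contradicting simplicity. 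Hence $H^0(B, {\cal L}^{-1}) = 0$.

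For the splitting on every fibre, I would argue by contradiction: assume ${\cal E}|_{X_b} \cong F_2$ for generic $b$. Pushing the filtration down to $B$, the map $\pi_*{\cal E} \to \pi_*L_2 = {\cal L}$ vanishes on each generic fibre (the unique section of $F_2$ is contained in its distinguished sub-${\cal O}_{X_b}$ and so maps to zero in the quotient), and hence vanishes identically between torsion-free sheaves on $B$, forcing $\pi_*{\cal E} = {\cal O}_B$. The filtration $L_1 \otimes L_2^{-1} \hookrightarrow \m{ad}({\cal E})$ (trace-free endomorphisms factoring as quotient-then-sub) pushes down to an inclusion ${\cal L}^{-1} \hookrightarrow \pi_*\m{ad}({\cal E})$ that is an isomorphism on the generic fibre, since the nilpotent endomorphism of $F_2$ generates $H^0(\m{ad}(F_2))$. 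A numerical analysis of the line bundle $\pi_*\m{ad}({\cal E})$, via the canonical bundle formula for the elliptic fibration and the graded pieces of the filtration on $\m{ad}({\cal E})$, should force a contradiction with the simplicity-imposed vanishing $h^0(\pi_*\m{ad}({\cal E})) = 0$. This reconciliation is the main obstacle.

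With the splitting established on every fibre, the Leray spectral sequence on the curve $B$ yields the short exact sequence
$$0 \to H^1(B, {\cal L}^{-1}) \to H^1(X, L_1 \otimes L_2^{-1}) \to H^0(B, R^1\pi_*(L_1 \otimes L_2^{-1})) \to 0.$$
The extension class $\xi$ of (\ref{2sec}) maps to zero in the right-hand term precisely because (\ref{2sec}) splits on every fibre, so $\xi \in H^1(B, {\cal L}^{-1})$; and simplicity forbids $\xi = 0$ (otherwise ${\cal E} \cong L_1 \oplus L_2$ is not simple). Combining $\xi \ne 0$, Riemann--Roch on $B$, and the earlier vanishing $h^0(B, {\cal L}^{-1}) = 0$ gives
$$1 \le h^1(B, {\cal L}^{-1}) = -\chi({\cal L}^{-1}) = \deg {\cal L} + g(B) - 1,$$
whence $\deg {\cal L} \ge 2 - g(B)$.
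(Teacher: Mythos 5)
Your reduction to the curve $B$ is the right framework, and the degree bound at the end is essentially the argument the paper gives. But the proof of the fibrewise splitting --- the first assertion of the proposition --- is not actually there: you reduce the non-split case to the statement $\pi_*{\cal E}={\cal O}_B$ and then defer to ``a numerical analysis of the line bundle $\pi_*\m{ad}({\cal E})$'', which you yourself flag as the main obstacle. That step is a genuine gap, and the detour through $\m{ad}({\cal E})$ is unnecessary.

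The missing observation is already contained in your own setup: push forward the sequence (\ref{2sec}) itself rather than the endomorphism bundle. Since $\pi_*L_1={\cal O}_B$, $R^1\pi_*L_1={\cal O}_B$ (relative duality, using $0\le a_i\le m_i-1$) and $\pi_*\pi^*{\cal L}={\cal L}$, one gets
$$0\seq{\cal O}_B\seq\pi_*{\cal E}\seq{\cal L}\stackrel{\mu}{\seq}{\cal O}_B\seq R^1\pi_*{\cal E}\seq{\cal L}\seq 0,$$
and the connecting map $\mu$ is an element of $\m{Hom}({\cal L},{\cal O}_B)=H^0({\cal L}^{\vee})$, which you have already shown to vanish by simplicity. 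Hence $\pi_*{\cal E}$ surjects onto ${\cal L}$ and has rank $2$, contradicting $\pi_*{\cal E}={\cal O}_B$ from the non-split case; no canonical-bundle or numerical computation is needed. (This is exactly how the paper argues.) Note also that, as written, your degree bound is logically conditional on the splitting, since you use the splitting to place $\xi$ in $H^1(B,{\cal L}^{-1})$; it can be made unconditional by observing that $H^0(R^1\pi_*(L_1\otimes L_2^{-1}))=H^0({\cal L}^{\vee})=0$ by relative duality, so the Leray sequence identifies $\Ext^1(\pi^*{\cal L},L_1)$ with $H^1(B,{\cal L}^{\vee})$ outright --- again the paper's route.
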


\begin{proof}
The simplicity of ${\cal E}$ implies that this sequence does not split and $H^{0}({\cal E}^{\vee})=0$. The second condition implies
$$h^{0}({\cal L}^{\vee})=0.$$
In order to translate the first condition we see that it implies $H^{1}(\pi^{*}{\cal L}^{\vee}\otimes{\cal O}_{X}(\sum a_{i}X_{b_i}))\not=0$. We use a Leray spectral sequence argument in order to push forward
$$h^{1}(\pi^{*}{\cal L}^{\vee}\otimes{\cal O}_{X}(\sum a_{i}X_{b_i}))=h^{1}({\cal L}^{\vee})+h^{0}(R^{1}\pi_{*}(\pi^{*}{\cal L}^{\vee}\otimes{\cal O}_{X}(\sum a_{i}X_{b_i}))).$$
The pushed forward bundle is trivial on the fibres, so $R^{1}\pi_{*}(\pi^{*}{\cal L}^{\vee}\otimes{\cal O}_{X}(\sum a_{i}X_{b_i}))$ is a line bundle and by 
relative duality
$$R^{1}\pi_{*}(\pi^{*}{\cal L}^{\vee}\otimes{\cal O}_{X}(\sum a_{i}X_{b_i}))=(\pi_{*}(\pi^{*}{\cal L}\otimes{\cal O}_{X}(\sum(m_{i}-a_{i}-1)X_{b_i}))))^{\vee}={\cal L}^{\vee}.$$
We already know that ${\cal L}^{\vee}$ has no sections, so the first condition tells us 
$$h^{1}({\cal L}^{\vee})>0.$$
Putting both together, $0>\chi({\cal L}^{\vee})=-\deg{\cal L}+1-g(B)$, i.e.
\begin{equation*}\deg{\cal L}\ge 2-g(B).\end{equation*}

The push forward of sequence (\ref{2sec}) yields
\begin{equation}\label{longpush}0\seq{\cal O}_{B}\stackrel{\mu_{1}}{\seq}\pi_{*}{\cal E}\stackrel{\mu_{2}}{\seq}{\cal L}\stackrel{\mu_{3}}{\seq}{\cal O}_{B}
\stackrel{\mu_{4}}{\seq} R^{1}\pi_{*}{\cal E}\stackrel{\mu_{5}}{\seq}{\cal L}\seq 0.\end{equation}
If sequence (\ref{2sec}) does not split at every fibre then $\pi_{*}{\cal E}$ is a line bundle. On the other hand, $\mu_{3}$ corresponds to a section
$s\in H^{0}({\cal L}^{\vee})=0$, so the long exact sequence (\ref{longpush}) splits into two short exact sequences
\begin{equation}0\seq{\cal O}_{B}\seq\left\{\begin{array}{cc}\pi_{*}{\cal E}\\ R^{1}\pi_{*}{\cal E}\end{array}\right\}\seq{\cal L}\seq 0.\end{equation}
This is not compatible with $\pi_{*}{\cal E}$ being a line bundle.
\end{proof}

\begin{cor}\label{cor3}
There are no filtrable rigid rank $2$ bundles on non-K\"ahler elliptic surfaces $X\seq B$ if $g(B)\ge 1$.
\end{cor}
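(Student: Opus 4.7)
The plan is to combine Proposition~\ref{split} with a Leray spectral sequence argument, reducing rigidity on $X$ to rigidity of a rank $2$ bundle on $B$. Let ${\cal E}$ be a filtrable rigid rank $2$ bundle on $X$; by Proposition~\ref{imm} it is simple with $\Delta({\cal E})=0$, so Proposition~\ref{split} applies and after tensoring with a line bundle ${\cal E}$ sits in the sequence (\ref{2sec}), splitting on every reduced fibre with $\deg{\cal L}\ge 2-g(B)$.

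First I would use the fibrewise splitting to identify ${\cal E}$ with a pull-back. The push-forward ${\cal F}:=\pi_{*}{\cal E}$ is a rank $2$ bundle on $B$ fitting in $0\seq{\cal O}_{B}\seq{\cal F}\seq{\cal L}\seq 0$, as in the derivation of (\ref{longpush}). Because ${\cal E}|_{X_{b}}$ reduces to ${\cal A}|_{X_{b}}\oplus\pi^{*}{\cal L}|_{X_{b}}$, it is trivial of rank $2$ on every smooth reduced fibre after absorbing ${\cal A}$; cohomology-and-base-change then turns the adjunction $\pi^{*}{\cal F}\seq{\cal E}$ into an isomorphism. Multiple fibres require passing to a cyclic base change that eliminates them, performing the identification upstairs, and descending. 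Simplicity of ${\cal E}$ passes to ${\cal F}$.

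Next, rigidity of ${\cal E}$ demands $h^{1}(X,\m{ad}({\cal E}))=0$. The Leray spectral sequence for $\pi^{*}\m{ad}({\cal F})$ degenerates, since $B$ is a curve, and yields $h^{1}(X,\m{ad}({\cal E}))\ge h^{1}(B,\m{ad}({\cal F}))$. Riemann--Roch on $B$, combined with $h^{0}(\m{ad}({\cal F}))=0$ from simplicity, gives $h^{1}(B,\m{ad}({\cal F}))=3(g(B)-1)$. For $g(B)\ge 2$ this is strictly positive, providing non-trivial determinant-preserving deformations of ${\cal E}$ and contradicting rigidity.

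The main obstacle is the boundary case $g(B)=1$, where the Riemann--Roch estimate collapses to zero. Here one has to extract deformations from the second Leray piece $H^{0}(B,\m{ad}({\cal F})\otimes R^{1}\pi_{*}{\cal O}_{X})$, or construct them by deforming the extension class of (\ref{2sec}) inside $\Ext^{1}(\pi^{*}{\cal L},{\cal A})$ along directions not arising from pull-back; combining this with the sharp inequality $\deg{\cal L}\ge 1$ and the multiple-fibre structure of $\pi$ should force the desired non-trivial determinant-preserving deformation, closing the argument for all $g(B)\ge 1$.
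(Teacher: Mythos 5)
Your reduction and your endgame both have problems, and the second one is fatal. First, the identification ${\cal E}\cong\pi^{*}{\cal F}$ is not justified as stated: in sequence (\ref{2sec}) the sub-line-bundle is ${\cal O}_X(\sum a_iX_{b_i})$ with $0\le a_i\le m_i-1$, which is a pullback from $B$ only when all $a_i=0$; otherwise ${\cal E}$ differs from a pullback by elementary modifications along the multiple fibres, and the ``cyclic base change and descend'' step you invoke is precisely where the work would lie. (Also, rigidity does not literally demand $h^1(X,\m{ad}({\cal E}))=0$ --- one must exhibit actual unobstructed deformations, which your curve argument does supply for $g(B)\ge 2$ since $h^2$ vanishes on $B$.) The fatal gap is the case $g(B)=1$: your Riemann--Roch count $h^{1}(B,\m{ad}({\cal F}))=3(g(B)-1)$ collapses to zero there, and $g(B)=1$ is not a boundary curiosity but the central case of the whole section --- primary Kodaira surfaces fibre over an elliptic base, and by Theorem \ref{main3} and Example \ref{exkod} these are exactly the non-K\"ahler elliptic surfaces on which rigid (non-filtrable) rank $2$ bundles actually exist. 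Your final paragraph only gestures at ``extracting deformations from the second Leray piece'' or ``deforming the extension class along directions not arising from pull-back''; no such deformation is exhibited, so the corollary is not proved where it matters most.

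The paper's proof avoids the genus dichotomy entirely and treats all $g(B)\ge 1$ uniformly. It keeps the extension (\ref{2sec}) and deforms the quotient to ${\cal L}_t={\cal L}\otimes{\cal Z}_t^{2}$ for a non-constant family ${\cal Z}_t\in\m{Pic}^{0}(B)$; this is the only place the hypothesis $g(B)\ge 1$ enters, namely to guarantee that $\m{Pic}^{0}(B)$ is positive-dimensional. Grauert semicontinuity, together with $h^{0}({\cal L}_t^{\vee})=0$ from Proposition \ref{split}, shows that $R^{1}\psi_{*}\Lambda^{\vee}$ is a trivial vector bundle over the parameter disc, so the extension class of (\ref{2sec}) extends to a nowhere vanishing section and yields extensions $\tilde{\cal E}_t$ for all $t$; twisting by $\pi^{*}{\cal Z}_t^{-1}$ makes the family determinant-preserving. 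Non-triviality is then detected by a jump of $h^{0}$ at $t=0$, after finitely many elementary modifications along fibres when $\deg{\cal L}>0$. To salvage your approach you would need, for $g(B)=1$, an explicit nonzero deformation direction (e.g.\ in $H^{0}(B,\m{ad}({\cal F})\otimes R^{1}\pi_{*}{\cal O}_{X})$) that integrates to an actual family --- which is essentially what the paper's twisting construction produces by hand.
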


\begin{proof}We assume there is a rigid filtrable rank $2$ bundle ${\cal E}$.
By the proof of Proposition \ref{split} we have $H^{0}({\cal L}^{\vee})=0, H^{1}({\cal L}^{\vee})\not=0$. By Grauert's semi-continuity theorem,
for every small deformation $\Lambda=({\cal L}_{t})\stackrel{\psi}{\seq}\Delta$ of ${\cal L}$ also $H^{0}({\cal L}_{t}^{\vee})=0$ and thus,
$$h^{1}({\cal L}_{t}^{\vee})=h^{1}({\cal L}^{\vee})$$
and $R^{1}\psi_{*}{\Lambda}^{\vee}$ is a vector bundle of this rank on $\Delta$. It must be trivial, so for the nonzero extension class 
$$\xi\in\Ext^{1}(\pi^{*}{\cal L},{\cal O}_{X}(\sum a_{i}X_{b_{i}}))=\Ext^{1}({\cal L},{\cal O}_{B})=H^{1}({\cal L}^{\vee})$$
(by the Leray and push forward argument of the proof of Prop. \ref{split}) 
given by sequence (\ref{2sec}) there is a nowhere vanishing section
$$\Xi\in H^{0}(\Delta,R^{1}\psi_{*}\Lambda^{\vee})$$
such that $\Xi(0)=\xi$. In turn, this section $\Xi$ gives rise to an extension
\begin{equation}\label{extt}0\seq{\cal O}_{X}(\sum a_{i}X_{b_{i}})\seq\tilde{\cal E}_{t}\seq\pi^{*}{\cal L}_{t}\seq 0\end{equation}
for every $t\in\Delta$. Now we specify the deformation $\Lambda$ to
$${\cal L}_{t}:={\cal L}\otimes {\cal Z}_{t}^{2}$$
for a non-trivial family ${\cal Z}_{t}\in \m{Pic}^{0}(B)$. Here we used $g(B)\ge 1$.  Tensoring (\ref{extt}) with ${\cal Z}_{t}^{-1}$ we obtain
a rank $2$ bundle ${\cal E}_{t}$ as an extension
$$0\seq{\cal O}_{X}(\sum a_{i}X_{b_{i}})\otimes\pi^{*}{\cal Z}_{t}^{-1}\seq{\cal E}_{t}\seq\pi^{*}({\cal L}\otimes{\cal Z}_{t})\seq 0.$$
We want to prove that this deformation is non-trivial.
If $\deg{\cal L}\le 0$, then by pushing forward we see $h^{0}({\cal E}_{t})=0$ for general $t$ and
$h^{0}({\cal E})>0$.

If $\deg{\cal L}>0$, then
by restricting the sequence to a smooth fibre $X_{b}$ we obtain an elementary modification
$$0\seq{\cal O}_{X}(\sum a_{i}X_{b_{i}})\otimes\pi^{*}{\cal Z}_{t}^{-1}\seq{\cal E}_{t}^{(1)}\seq\pi^{*}({\cal L}\otimes{\cal O}(-b)\otimes{\cal Z}_{t})\seq 0.$$
Inductively, we get an elementary modification
$$0\seq{\cal O}_{X}(\sum a_{i}X_{b_{i}})\otimes\pi^{*}{\cal Z}_{t}^{-1}\seq{\cal E}_{t}^{(\deg{\cal L})}\seq\pi^{*}({\cal L}_{0}\otimes{\cal Z}_{t})\seq 0$$
for a line bundle ${\cal L}_{0}\in \m{Pic}^{0}(B)$.  Again it follows $h^{0}({\cal E}_{t}^{(\deg{\cal L})})=0$ for general $t$ and
$h^{0}({\cal E}^{(\deg{\cal L})})>0$.
So the initial deformation was a non-trivial determinant
preserving deformation of ${\cal E}$, contradicting the assumption.
\end{proof}

Finally, we close the last gap. Let us denote ${\cal M}:={\cal O}(\sum b_{i})$.

\begin{cor}\label{cor2}
There are no filtrable rigid rank $2$ bundles on non-K\"ahler elliptic surfaces $X\seq{\bb P}^{1}$.
\end{cor}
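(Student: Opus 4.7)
The strategy is to imitate the proof of Corollary \ref{cor3}, but since $\mathrm{Pic}^0(\mathbb{P}^1)=0$ the deformation of $\mathcal{L}$ used there is unavailable and must be replaced. Assume for contradiction that a filtrable rigid rank $2$ bundle $\mathcal{E}$ exists. By Proposition \ref{split} (after tensoring with a line bundle),
$$0\seq\mathcal{O}_X(D)\seq\mathcal{E}\seq\pi^{*}\mathcal{L}\seq 0,$$
with $D=\sum a_{i}X_{b_{i}}$, $0\le a_{i}\le m_{i}-1$, and $\deg\mathcal{L}\ge 2-g(\mathbb{P}^{1})=2$. Simplicity of $\mathcal{E}$ forces the extension to be non-split, and by the Leray identification in the proof of Proposition \ref{split} its class lies in $\mathrm{Ext}^{1}(\pi^{*}\mathcal{L},\mathcal{O}_X(D))\cong H^{1}(\mathbb{P}^{1},\mathcal{L}^{\vee})$, a space of dimension $\deg\mathcal{L}-1$.

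First suppose $\deg\mathcal{L}\ge 3$, so $\dim\mathrm{Ext}^{1}\ge 2$. Since both the sub $\mathcal{O}_X(D)$ and the quotient $\pi^{*}\mathcal{L}$ are line bundles with endomorphism group $\mathbb{C}^{*}$, the isomorphism class of a non-split extension depends only on the $\mathbb{C}^{*}$-orbit of its class. Consequently $\mathbb{P}(\mathrm{Ext}^{1})$ of positive dimension parametrises pairwise non-isomorphic rank $2$ bundles, all with the same determinant $\mathcal{O}_X(D)\otimes\pi^{*}\mathcal{L}$. Restricting the universal extension to any holomorphic disc through $[\xi]$ therefore produces a non-trivial determinant preserving deformation of $\mathcal{E}$, contradicting rigidity.

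Now suppose $\deg\mathcal{L}=2$, so $\dim\mathrm{Ext}^{1}=1$ and the previous step is empty. In this case we show that simplicity of $\mathcal{E}$ already fails on the surfaces where it can, and otherwise that an explicit deformation remains available. From $\mathrm{ad}(\mathcal{E})=\mathrm{Sym}^{2}\mathcal{E}\otimes\det\mathcal{E}^{-1}$ and the filtration of $\mathrm{Sym}^{2}\mathcal{E}$ induced by the sequence above we deduce
$$0\seq\mathcal{E}\otimes\pi^{*}\mathcal{L}^{-1}\seq\mathrm{ad}(\mathcal{E})\seq\mathcal{O}_X(-D)\otimes\pi^{*}\mathcal{L}\seq 0.$$
The push-forward computations behind Proposition \ref{split} give $\pi_{*}\mathcal{O}_X(D)=\mathcal{O}_B$, $R^{1}\pi_{*}\mathcal{O}_X(D)=\mathcal{O}_B$, and $\pi_{*}\mathcal{O}_X(-D)=\mathcal{O}_{\mathbb{P}^1}(-\sum_{a_{i}>0}b_{i})$. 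Combined with Serre duality against $\xi$ (which pairs non-trivially via $H^{1}(\mathcal{O}_X)\cong H^{1}(K_X)^{\vee}$), this forces both $H^{0}(\mathcal{E}\otimes\pi^{*}\mathcal{L}^{-1})$ and $H^{1}(\mathcal{E}\otimes\pi^{*}\mathcal{L}^{-1})$ to vanish, so
$$h^{0}(\mathrm{ad}(\mathcal{E}))=h^{0}\bigl(\mathbb{P}^{1},\mathcal{L}\otimes\mathcal{O}_{\mathbb{P}^1}(-\textstyle\sum_{a_{i}>0}b_{i})\bigr)=\max\bigl(0,\,3-|\{i:a_{i}>0\}|\bigr).$$
Simplicity of $\mathcal{E}$ therefore forces $|\{i:a_{i}>0\}|\ge 3$, which is impossible on a Hopf surface (no multiple fibres).

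The remaining and hardest sub-case is $\deg\mathcal{L}=2$ with $|\{i:a_{i}>0\}|\ge 3$, which can only occur on a secondary Kodaira surface over $\mathbb{P}^{1}$. There $\mathcal{E}$ may still be simple, so one must produce a non-trivial deformation explicitly. The idea is to replace the $\mathrm{Pic}^{0}(B)$-twist of Corollary \ref{cor3} by a twist in $\mathrm{Pic}^{0}(X)\cong\mathbb{C}^{*}$ (non-trivial because $b_{1}(X)=1$), balanced by a compensating correction coming from the divisor $\mathcal{M}=\mathcal{O}_{\mathbb{P}^{1}}(\sum b_{i})$ of degree $\ge 3$, and then to verify non-triviality of the resulting family by the same $h^{0}$-comparison used at the end of the proof of Corollary \ref{cor3}. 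Making this compensation precise so that the extension class survives the twist—and in particular does not drop by upper semicontinuity—is where I expect the main technical work to lie.
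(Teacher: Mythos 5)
Your proposal takes a completely different route from the paper and, as it stands, is not a proof. The paper never constructs a deformation here at all: it restricts $\m{ad}({\cal E})$ to the union $M$ of the reduced multiple fibres, uses the fibrewise splitting from Proposition \ref{split} to identify $h^{0}(\m{ad}({\cal E}|_{M}))=3\deg{\cal M}$, and then bounds this quantity from above by $3(\deg{\cal M}-2)$ via the ideal sequence of $M$, simplicity of ${\cal E}$, the vanishing $\chi(\m{ad}({\cal E})\otimes\mu^{-1})=0$, Serre duality and the canonical bundle formula --- an immediate contradiction once $\deg{\cal M}>0$, the case $\deg{\cal M}=0$ being the elliptic Hopf case already settled. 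Your strategy of producing explicit non-trivial deformations has genuine gaps in each branch.

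In the case $\deg{\cal L}\ge 3$, the assertion that ${\bb P}(\Ext^{1}(\pi^{*}{\cal L},{\cal O}_{X}(D)))$ parametrises pairwise non-isomorphic bundles is unjustified and fails in general. Non-proportional extension classes are forced to have non-isomorphic middle terms only when $\mbox{\rm Hom}({\cal O}_{X}(D),\pi^{*}{\cal L})=0$; here $\mbox{\rm Hom}({\cal O}_{X}(D),\pi^{*}{\cal L})=H^{0}({\bb P}^{1},{\cal L}(-\sum_{a_{i}>0}b_{i}))$, which is non-zero for instance whenever all $a_{i}=0$ (then it has dimension $\deg{\cal L}+1\ge 4$). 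In that regime a positive-dimensional projective space of extension classes can collapse onto finitely many isomorphism classes (compare extensions of ${\cal O}(2)$ by ${\cal O}(-2)$ on ${\bb P}^{1}$, where every non-split class yields ${\cal O}(-1)\oplus{\cal O}(1)$ or ${\cal O}^{\oplus 2}$), and the restriction of the universal extension to a disc can be a trivial family; to rescue the argument you would have to prove the relevant Kodaira--Spencer map is non-zero, which you do not attempt.

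The case analysis is also incomplete at the other end. The reduction of the residual case $\deg{\cal L}=2$, $|\{i:a_{i}>0\}|\ge 3$ to secondary Kodaira surfaces overlooks non-K\"ahler properly elliptic surfaces fibred over ${\bb P}^{1}$, which can carry arbitrarily many multiple fibres and fall under the corollary as stated; the intermediate cohomological identities in that step (the vanishing of $H^{0}$ and $H^{1}$ of ${\cal E}\otimes\pi^{*}{\cal L}^{-1}$ and the exactness of the $h^{0}$-count for $\m{ad}({\cal E})$) are asserted rather than proved. Finally, you explicitly leave the hardest sub-case open: since $\m{Pic}^{0}({\bb P}^{1})=0$ there is no analogue of the twist used in Corollary \ref{cor3}, and the ``compensating correction'' you gesture at is precisely the missing content. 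The paper's contradiction via the estimate $3\deg{\cal M}\le 3(\deg{\cal M}-2)$ sidesteps all of these difficulties at once.
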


\begin{proof}
We may assume $\deg{\cal M}>0$, since otherwise $X$ is an elliptic Hopf surface and the claim is already proved for those. 
We denote $\mu:={\cal O}(\sum X_{b_{i}})={\cal O}(M)$. The ideal
sequence
$$0\seq\m{ad}({\cal E})\otimes\mu^{-1}\seq\m{ad}({\cal E})\seq\m{ad}({\cal E}|_{M})\seq 0$$
implies
\begin{eqnarray*}h^{0}(\m{ad}({\cal E}|_{M}))&\le& h^{1}(\m{ad}({\cal E})\otimes\mu^{-1})\\
&=&h^{2}(\m{ad}({\cal E})\otimes\mu^{-1})\\
&=&h^{0}(\m{ad}({\cal E})\otimes\mu\otimes K_{X})\\
&=&h^{0}(\m{ad}({\cal E})\otimes\pi^{*}(K_{\bb P^{1}}\otimes{\cal M}))\\
&=&h^{0}(\pi_{*}\m{ad}({\cal E})\otimes{\cal O}_{\bb P^{1}}(-2)\otimes{\cal M}).
\end{eqnarray*}
Now we use our assumptions. By Proposition \ref{split} the sequence (\ref{2sec}) splits on every fibre, the left hand side equals $3\deg{\cal M}$ and 
$\pi_{*}\m{ad}({\cal E})$ is a rank $3$ bundle on $B={\bb P}^{1}$ without sections. This means that the right hand side can be
estimated by $3(\deg{\cal M}-2)$, a contradiction.
\end{proof}

\end{document}